\newtheorem{thm}{Theorem}[section]
\newtheorem{prop}[thm]{Proposition}
\newtheorem{lem}[thm]{Lemma}
\newtheorem{prob}[thm]{Problem}
\newtheorem{rem}[thm]{Remark}
\theoremstyle{definition}
\let\c@equation\c@thm
\numberwithin{equation}{section}
\title[KdV Equation]{Pseudo-backstepping and its application to the control of Korteweg-de Vries equation from the right endpoint on a finite domain}
\author{Türker Özsarı\textsuperscript{*} \& Ahmet Batal}
\address{Department of Mathematics, Izmir Institute of Technology, Urla, Izmir, TURKEY}
\thanks{\textsuperscript{*}Correspondence: Türker Özsarı, Department of Mathematics, Izmir Institute of Technology, Urla, Izmir 35430, TURKEY; E-mail: turkerozsari@iyte.edu.tr}
\thanks{This research was funded by IZTECH BAP Grant 2017IYTE14}
\begin{document}

\begin{abstract}
In this paper, we design Dirichlet-Neumann boundary feedback controllers for the Korteweg-de Vries (KdV) equation that act at the right endpoint of the domain.  The length of the domain is allowed to be critical.  Constructing backstepping controllers that act at the right endpoint of the domain is more challenging than its left endpoint counterpart. The standard application of the backstepping method fails, because corresponding kernel models become overdetermined.  In order to deal with this difficulty, we introduce the \emph{pseudo-backstepping} method, which uses a \emph{pseudo-kernel} that satisfies all but one desirable boundary condition.  Moreover, various norms of the pseudo-kernel can be controlled through a parameter in one of its boundary conditions.  We prove that the boundary controllers constructed via this pseudo-kernel still exponentially stabilize the system with the cost of a low exponential rate of decay.  We show that a single Dirichlet controller is sufficient for exponential stabilization with a slower rate of decay.  We also consider a second order feedback law acting at the right Dirichlet boundary condition.  We show that this approach works if the main equation includes only the third order term, while the same problem remains open if the main equation involves the first order and/or the nonlinear term(s). At the end of the paper, we give numerical simulations to illustrate the main result.
\end{abstract}

\keywords{Korteweg-de-Vries equation; back-stepping; pseudo-backstepping; feedback stabilization; \and boundary controller}
\subjclass[2010]{93D15, 35Q53, 93C20, 93C10, 93D20, 35A01, 35B45}
\def\uppercasenonmath#1{} 
\maketitle

\section{Introduction}
This article is devoted to the study of the boundary feedback stabilization of the Korteweg-de Vries (KdV) equation on a bounded domain $\Omega=(0,L)\subset \mathbb{R}$.  The linear version of the model under consideration is given by
\begin{equation}\label{KdVBurgers}
 	\begin{cases}
 	\displaystyle u_{t} + u_{x} + u_{xxx} =0 & \text { in } \Omega\times \mathbb{R_+},\\
    u(0,t) = 0, u(L,t) = U(t), u_{x}(L,t)=V(t)  & \text { in } \mathbb{R_+},\\
    u(x,0)=u_0(x) & \text { in } \Omega,
 	\end{cases}
\end{equation} whereas the nonlinear version of this model is written with the main equation in \eqref{KdVBurgers} replaced with \begin{equation}\label{nonlinearKdV}u_t+u_x+u_{xxx}+uu_x=0.\end{equation}

In \eqref{nonlinearKdV}, $u=u(x,t)$ is a real valued function that can model the evolution of the amplitude of a weakly nonlinear shallow dispersive wave in space and time \cite{KdVPaper}.  The inputs $U(t)=U(u(t,\cdot))$ and $V(t)=V(u(t,\cdot))$ at the right endpoint of the boundary are feedbacks. The goal is to choose these boundary feedbacks so that the solutions of \eqref{KdVBurgers} and \eqref{nonlinearKdV} decay to zero as $t\rightarrow \infty$, at an exponential rate in the mean-square sense.

Controlling the behavior of solutions of evolution equations is an important topic, and many approaches have been proposed.  One method is to use local or global interior controllers.  Another method is to use external (boundary) controllers, especially in those models where it is difficult to access the domain.  Using feedback type controls is a common tactic to stabilize the solutions. However, non-feedback type controls (open loop control systems) are also used for steering solutions to or near a desired state.  Exact, null, or approximate controllability models have been developed for almost all well-known PDEs.

Exact boundary controllability of linear and nonlinear KdV equations with the same type of boundary conditions as in \eqref{KdVBurgers} was studied by \cite{Rosier1997}, \cite{Cor2004}, \cite{Zhang99}, \cite{Glass08}, \cite{Cerpa07}, \cite{Cerpa09}, \cite{RosZha09}, and \cite{Glass10}. In these papers, the boundary inputs are chosen in advance to steer solutions to a desired final state at a given time.  This results in an open loop model. In contrast, the boundary inputs in our model depend on the solution itself, and \eqref{KdVBurgers} is therefore closed loop.

Stabilization of solutions of the KdV equation with a localised interior damping was achieved by \cite{Perla2002}, \cite{Pazo05}, \cite{Mass07}, and \cite{Balogh2000}. There are also some results achieving stabilization of the KdV equation by using predetermined local boundary feedbacks; see for instance \cite{Liu2002}, and \cite{Jia2016}.

\subsection{Motivation}
\eqref{KdVBurgers} and \eqref{nonlinearKdV} with homogeneous boundary conditions ($U=V\equiv 0$) are both dissipative since $\frac{d}{dt}\|u(t)\|_{L^2(\Omega)}^2\le 0.$ However, this does not always guarantee exponential decay.  It is well-known that if $\displaystyle L\in \mathcal{N}\equiv \left\{2\pi\sqrt{\frac{k^2+kl+l^2}{3}}, k,l\in \mathbb{N}\right\}$ (so called \emph{critical lengths} for KdV), then the solution does not need to decay to zero at all.  For example, if $L=2\pi$, $u=1-\cos(x)$ is a (time independent) solution of \eqref{KdVBurgers} on $\Omega=(0,2\pi)$, but its $L^2-$norm is constant in $t$.  On the other hand, if $L$ is not critical, one can show the exponential stabilization of solutions for \eqref{KdVBurgers} under homogeneous boundary conditions; see for example \cite[Theorem 2.1]{Perla2002}.

Recently, \cite{Cerpa2013} studied the boundary feedback stabilization of the KdV equation with the boundary conditions
\begin{equation}\label{BCforKdVLeft}
  u(0,t) = U(t), u_{x}(L,t) = 0, u(L,t) = 0
\end{equation} by using the back-stepping technique (see for example \cite{KrsBook}). \cite{Cerpa2013} proved that given any $r>0$, there corresponds a smooth kernel $k=k(x,y)$ such that the boundary feedback controller $U(t)=U(u(t,\cdot))=\int_0^Lk(0,y)u(y,t)dy$ steers the solution of the linear KdV equation to zero with the decay rate estimate $\|u(t)\|_{L^2(\Omega)}\lesssim \|u_0\|_{L^2(\Omega)}e^{-r t}.$ Moreover, the same result also holds true for the nonlinear KdV equation provided that $u_0$ is sufficiently small in the $L^2-$sense.  Here, $k=k(x,y)$ is an appropriately chosen kernel function satisfying a third order PDE model on a triangular domain that involves three boundary conditions. In \cite{Cerpa2013}, the control acts on the Dirichlet boundary condition at the \emph{left} endpoint of the domain.  However, the situation is very different if the control acts at the \emph{right} endpoint of the domain, because then the kernel of the backstepping controller has to satisfy an overdetermined PDE model whose solution may or may not exist.  Therefore, the problem of finding backstepping controllers acting at the \emph{right} endpoint of the domain is interesting.

Coron \& Lü \cite{Cor14} studied this problem with a single controller acting from the Neumann boundary condition on domains of \emph{uncritical} lengths.  They prove the rapid exponential stabilization of solutions for the KdV equation under a smallness assumption on the initial datum.  The method of \cite{Cor14} is based on using a rough kernel function in the backstepping integral transformation.  The construction of the rough kernel relies on the exact controllability of the linear KdV equation by the Neumann boundary control acting at the right endpoint of the domain.  However, the exact controllability was proved only for the domains of uncritical lengths.  On the other hand, the exponential decay of solutions for the linearized KdV equation holds even without adding any control to the system when the length of the domain does not belong to the set of critical lengths \cite{Perla2002}.  Therefore, the following remains as an important problem:
\begin{prob}\label{mainprob} Let $L>0$ (not necessarily uncritical).  Can you find a kernel $k=k(x,y)$ such that the solution of \eqref{KdVBurgers} and \eqref{nonlinearKdV} satisfies \begin{equation}\label{decayest}\|u(\cdot,t)\|_{L^2(\Omega)}=\mathcal{O}(e^{-rt})\end{equation} for some $r>0$ with boundary feedback controllers given by  \begin{equation}\label{controller}
                          U(t)=\int_0^Lk(L,y)u(y,t)dy \text{ and } V(t)=\int_0^Lk_x(L,y)u(y,t)dy\,?
                        \end{equation}
\end{prob}
A stronger version of the above problem is the following:
\begin{prob}\label{mainprob1}
  Given $r>0$, can you find a kernel $k=k(x,y)$ such that the solution of \eqref{KdVBurgers} and \eqref{nonlinearKdV} satisfies the $L^2-$decay estimate \eqref{decayest} with the boundary feedback controllers given in \eqref{controller}?
\end{prob}

This paper and the method proposed address only Problem \ref{mainprob}, and the latter problem still remains open for domains of critical length.

In order to understand the nature of the problem and the difficulty here, let us consider the linearised KdV equation in \eqref{KdVBurgers}.  A backstepping controller for this linear model is generally constructed by using a transformation given by \begin{equation}\label{transform}w(x,t)\equiv u(x,t)-\int_0^xk(x,y)u(y,t)dy,\end{equation} where the unknown kernel function $k(x,y)$ is chosen in such a way that if $u$ is a solution of \eqref{KdVBurgers} with the boundary feedback controllers given in \eqref{controller}, then $w$ is a solution of the damped homogeneous initial-boundary value problem (so called ``\emph{target system}'')
\begin{equation}\label{HomKdVBurgers}
 	\begin{cases}
 	\displaystyle w_{t} + w_{x} + w_{xxx} + \lambda w = 0 & \text { in } \Omega\times \mathbb{R_+},\\
    w(0,t) = w(L,t) = w_{x}(L,t) = 0 & \text { in } \mathbb{R_+},\\
    w(x,0)=w_0(x)\equiv u_0-\int_0^xk(x,y)u_0(y)dy & \text { in } \Omega.
 	\end{cases}
\end{equation}  The reason is that the solution of \eqref{HomKdVBurgers} satisfies $\|w(t)\|_{L^2(\Omega)}=O(e^{-\lambda t})$, and if the given transformation is invertible, one can hope to get a similar decay property for $u$.

The essence of the back-stepping algorithm is to find an appropriate kernel function $k$ which serves the purpose.  In order to do this, one simply assumes that $u$ solves \eqref{KdVBurgers} and plugs in $u(x,t)-\int_0^xk(x,y)u(y,t)dy$ into the main equation in \eqref{HomKdVBurgers} wherever one sees $w$.  This gives a set of sufficient conditions that the kernel has to satisfy. Note that $w$ satisfies the given homogeneous boundary conditions $w(0,t)=w(L,t)=w_x(L,t)=0$ by the transformation in \eqref{transform} and the choice of the feedback controllers in \eqref{controller}.  In order for the main equation in \eqref{HomKdVBurgers} to be satisfied, one can impose a few conditions on $k$.  Indeed, computing the derivative of $w$ with respect to the temporal and spatial derivatives and putting these together, we obtain the following:
\begin{gather}
  \nonumber w_{t}(x,t) + w_{x}(x,t) + w_{xxx}(x,t) + \lambda w(x,t)= k_y(x,0)u_x(0,t)\\
  -\int_{0}^{x}u(y,t)\left[k_{xxx}(x,y) + k_{x}(x,y) + k_{yyy}(x,y) + k_{y}(x,y) + \lambda k(x,y)\right]dy \label{23} \\
  \nonumber -k(x,0)u_{xx}(0,t) - u_{x}(x,t)\left[k_{y}(x,x) + k_{x}(x,x) + 2\frac{d}{dx}k(x,x)\right] \\
  \nonumber + u(x,t)\left[\lambda - k_{xx}(x,x) + k_{yy}(x,x) - \frac{d}{dx}k_{x}(x,x) - \frac{d^{2}}{dx^{2}}k(x,x)\right].
\end{gather}
The above equation is the same as that of the target system \eqref{HomKdVBurgers} if $k$ solves the third order partial differential equation together with the set of boundary conditions given by
\begin{eqnarray} \label{kEq}
  \nonumber k_{xxx} + k_{yyy} + k_{y} + k_{x}   &=& -\lambda k, \\
   k(x,x) = k(x,0)=k_y(x,0) &=& 0, \\
  \nonumber k_{x}(x,x) &=& \frac{\lambda}{3}x,
\end{eqnarray} where the PDE model is considered on the triangular spatial domain $\mathcal{T}\equiv \{(x,y)\in \mathbb{R}^2\,|\,x\in [0,L], y\in [0,x]\}\,\,\text{(see Figure \ref{regT} below)}.$
\begin{figure}[H]
  \centering
   \includegraphics[scale=0.50]{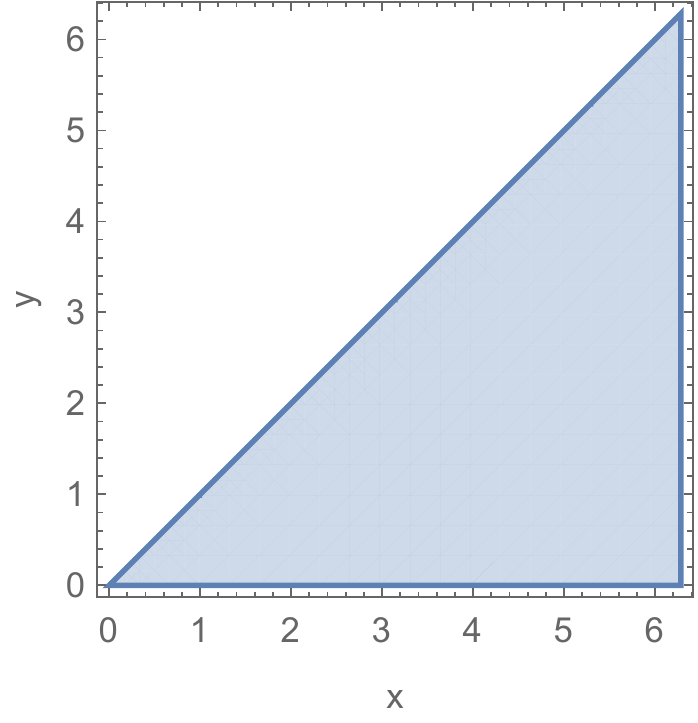}
  \caption{Triangular region $T$ for $L=2\pi$}\label{regT}
\end{figure}
In order to solve the problem \eqref{kEq}, one generally first applies a change of variables. Here, an appropriate choice would be to define $t\equiv y$, $s\equiv x-y$, and $G(s,t)\equiv k(x,y)$.  Then, $G$ satisfies the boundary value problem given by
\begin{eqnarray}
  \label{Geq}G_{ttt} - 3G_{stt}+ 3G_{sst} + G_{t} &=& -\lambda G, \\
  \label{Geqb}G(s,0) =  G_t(s,0) = G(0,t) &=& 0, \\
  \label{Geqc}G_s(0,t) &=& \frac{\lambda}{3}t
\end{eqnarray} on the triangular domain $\mathcal{T}_{0}\equiv \left\{ (s,t) \,|\, t \in [0,L], s \in [0,L-t]\right\}\,\,\text{(see Figure \ref{regT0} below)}.$

Unfortunately, it is not easy to decide whether \eqref{Geq}-\eqref{Geqc} has a solution.  Note that there is also a mismatch between the boundary conditions $ G_t(s,0)=0$ and $G_s(0,t) = \displaystyle \frac{\lambda}{3}t$ in the sense that $G_{ts}(0,0)=0\neq G_{st}(0,0)=\displaystyle \frac{\lambda}{3}.$  Hence, the standard back-stepping algorithm fails because it enforces us to solve an overdetermined singular PDE model.  This issue does not arise if one tries to control the system from the left endpoint of the domain as in \cite{Cerpa2013}.

\begin{figure}[H]
  \centering
   \includegraphics[scale=0.50]{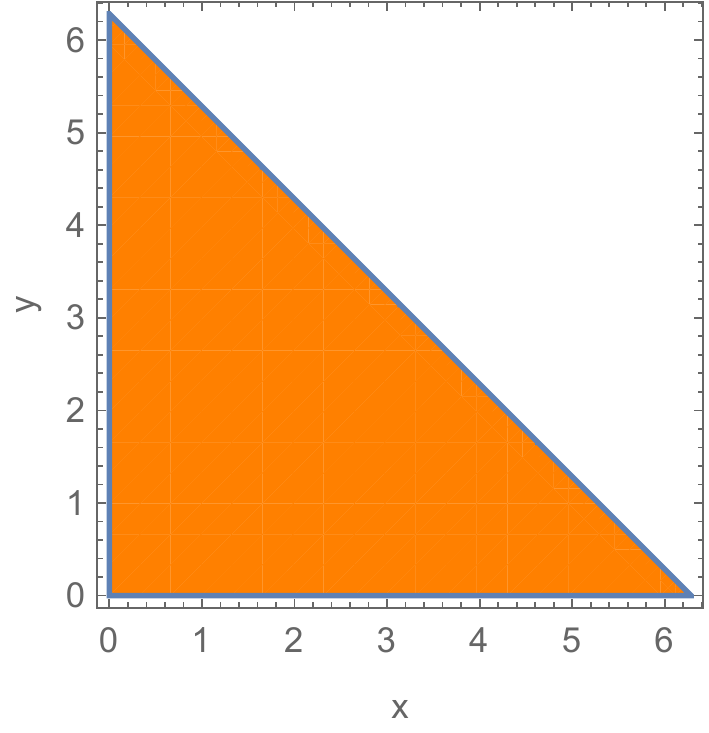}
  \caption{Triangular region $T_0$ for $L=2\pi$}\label{regT0}
\end{figure}

The adverse effect of the nonhomogeneous boundary condition in the kernel PDE model was eliminated by expanding the domain from a triangle into a rectangle in \cite{Cor14}.  However, this approach brings a dirac delta term to the right hand side of the main equation; see the kernel model in \cite[Section 1]{Cor14}.  The cost of this is that the constructed kernel cannot be expected to be very smooth.  However, the higher regularity is crucial to rigorously justify the calculations in \eqref{23} that show the equivalence of the original plant and the exponentially stable target system.  Therefore, we rely on a different idea based on constructing an imperfect but smooth kernel. The details of this construction are given below.

\subsection{Pseudo-backstepping} We introduce a new backstepping technique which eliminates the difficulties explained in the previous section.  In the standard backstepping method, the plant model \eqref{KdVBurgers} is transformed into the most desirable (e.g., exponentially stable) target system with a transformation as in \eqref{transform}.  This is called forward transformation.  The target system is then transformed back into the plant model via an inverse transformation, generally in the form \begin{equation}\label{backwardt}u(x,t)=w(x,t)+\int_0^xp(x,y)w(y,t)dy.\end{equation}  This is called backward transformation.  A combination of these two steps allows one to conclude that the plant is stable if and only if the target system is stable in the same sense (see Figure \ref{Backstepping}).
\begin{figure}[H]
  \centering
   \includegraphics[scale=0.75]{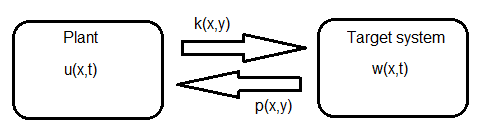}
  \caption{Standard back-stepping}\label{Backstepping}
\end{figure}
Unfortunately, applying this algorithm to our problem forces kernels $p$ and $k$ to be solutions of overdetermined boundary value problems, and thus the method fails.

Our strategy uses a pseudo-kernel which is chosen as a solution of a corrected version of the gain control PDE given by:
\begin{eqnarray}
  \label{Geqeps1}\tilde{G}_{ttt} - 3\tilde{G}_{stt}+ 3\tilde{G}_{sst} + \tilde{G}_{t} &=& -\lambda \tilde{G}, \\
  \label{Geqeps2}\tilde{G}(s,0) =   \tilde{G}(0,t) &=& 0, \\
  \label{Geqeps3}\tilde{G}_s(0,t) &=& \frac{{\lambda}}{3}t
\end{eqnarray} on the triangular domain $\mathcal{T}_{0}$.  Unlike in the previous model \eqref{Geq}-\eqref{Geqc}, here the boundary condition $\tilde{G}_t(s,0)=0$ is completely disregarded.  One advantage of using this modified model is that we can solve it. Another is that, even though the boundary condition $\tilde{G}_t(s,0)=0$ is disregarded, we can control the size of this boundary condition by choosing ${\lambda}$ sufficiently small.  The cost of using a pseudo-kernel is that the target system changes, (see the modified target system in \eqref{HomKdVBurgers-1}), which causes a slower rate of decay.    Nevertheless, this new method (henceforth referred to as \emph{pseudo-backstepping}) allows us to obtain physically reasonable exponential decay rates for some choice of $\lambda$ (see Table \ref{table1} for sample decay rates for some values of $\lambda$ on a domain of length $L=2\pi$).

Another aspect of our method is that instead of using a concrete backward transformation as in \eqref{backwardt}, we rely on the existence of an abstract inverse transformation that maps the solution of the modified target system back into the original plant.   The existence of such a transformation is proved via succession (see Lemma \ref{inverselem} below).  This type of backward transformation was previously used in the stabilization of the heat equation with a localized source of instability \cite{Liu03}.  We do not search for an inverse of type \eqref{backwardt} to avoid a highly overdetermined system that would result from computing the temporal and spatial derivatives of the given transformation and finding the conditions that $p$ has to satisfy.
\begin{figure}[h]
  \centering
   \includegraphics[scale=0.75]{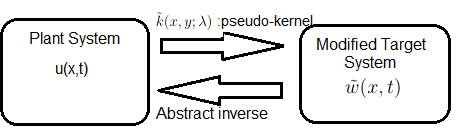}
  \caption{Pseudo-backstepping}\label{pseudo-bs}
\end{figure}

\subsection{Main results}

Applying the pseudo-backstepping method explained above to the linearized and nonlinear KdV models given in \eqref{KdVBurgers} and \eqref{nonlinearKdV}, we are able to prove the following wellposedness and stabilization theorems:
\begin{thm}[Wellposedness]\label{Linthm0}
Let $T>0$, $u_0\in L^2(\Omega)$ and \begin{equation}\label{controllers}U(t) = \int_0^L\tilde{k}(L,y)u(y,t)dy,\,V(t)= \int_0^L\tilde{k}_x(L,y)u(y,t)dy,\end{equation} where $\tilde{k}$ is a smooth kernel given by \eqref{ktilde}.
Then, \eqref{KdVBurgers} has a unique solution $u\in C([0,T];L^2(\Omega))\cap L^2(0,T;H^1(\Omega))$ satisfying also $u_x\in C([0,L];L^2(0,T)).$  Moreover, the same result also holds true for the nonlinear KdV equation \eqref{nonlinearKdV} if $\|u_0\|_{L^2(\Omega)}$ is sufficiently small.
\end{thm}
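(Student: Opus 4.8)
The plan is to treat \eqref{KdVBurgers} with the feedback controllers \eqref{controllers} as a linear evolution equation and obtain the solution through a fixed-point / semigroup argument, using the smoothness of the pseudo-kernel $\tilde k$ to control the boundary terms. First I would rewrite the problem as a nonhomogeneous boundary-value problem for the KdV operator $A = -\partial_x - \partial_x^3$ on $\Omega = (0,L)$ with domain encoding the homogeneous Dirichlet condition $u(0,t)=0$ together with the two right-endpoint conditions $u(L,t)=U(t)$, $u_x(L,t)=V(t)$. Since $U$ and $V$ depend \emph{linearly} on $u(\cdot,t)$ through the fixed smooth kernel functions $\tilde k(L,\cdot)$ and $\tilde k_x(L,\cdot)$ in $L^2(\Omega)$, these boundary terms are bounded functionals of the state, so the feedback is a bounded (though nonlocal) perturbation. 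The natural route is to first recall the known linear estimates for the KdV equation on a bounded interval with inhomogeneous boundary data (the hidden-regularity/trace estimate $u_x\in C([0,L];L^2(0,T))$, the space-time bound in $C([0,T];L^2(\Omega))\cap L^2(0,T;H^1(\Omega))$, and the Kato smoothing gain) — these are standard and available in the references cited in the introduction (e.g.\ \cite{Rosier1997}, \cite{Zhang99}, \cite{Cerpa2013}). One then sets up the map $\Gamma: v \mapsto u$, where $u$ solves the linear KdV equation with boundary data $U(t)=\int_0^L \tilde k(L,y)v(y,t)\,dy$, $V(t)=\int_0^L\tilde k_x(L,y)v(y,t)\,dy$, on the space $X_T \equiv C([0,T];L^2(\Omega))\cap L^2(0,T;H^1(\Omega))$ with the additional trace norm; a contraction estimate on a short time interval $[0,T_0]$, with $T_0$ independent of the data, then gives local existence, and linearity lets one iterate to reach any $T>0$.

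The key steps, in order: (1) state precisely the linear a priori estimate for KdV with boundary inputs $(0,U,V)\in L^2\times H^{1/3}(0,T)\times L^2(0,T)$ (or the relevant compatible spaces) and initial datum in $L^2(\Omega)$, producing a solution in $X_T$ with $u_x(L,\cdot)\in L^2(0,T)$ and, more generally, $u_x(x,\cdot)\in C([0,L];L^2(0,T))$; (2) verify that the feedback operators $v\mapsto \int_0^L\tilde k(L,y)v(y,t)\,dy$ and $v\mapsto \int_0^L\tilde k_x(L,y)v(y,t)\,dy$ map $X_T$ continuously into the boundary trace spaces required by step (1), with operator norm controlled — here the smoothness of $\tilde k$ from \eqref{ktilde} (hence of $\tilde k(L,\cdot),\ \tilde k_x(L,\cdot)\in C^\infty([0,L])$) is what makes $\int_0^L \tilde k(L,y)v(y,t)\,dy$ lie in $H^{1/3}(0,T)$ when $v\in X_T$, after integrating by parts in $x$ and using the equation to trade a spatial derivative for a fractional time derivative; (3) define $\Gamma$ and prove it is a contraction on a ball of $X_{T_0}$ for $T_0$ small, using that the gain constant in the linear estimate can be absorbed by taking $T_0$ small; (4) conclude $u=\Gamma u$ is the unique fixed point, hence the unique solution on $[0,T_0]$, and then extend to $[0,T]$ by the linearity of the problem (reapply on $[T_0,2T_0]$ with new initial datum $u(\cdot,T_0)\in L^2(\Omega)$, the estimate being time-translation invariant). (5) For the nonlinear equation \eqref{nonlinearKdV}, add the Duhamel term coming from $-uu_x$; the nonlinearity is handled by the bilinear estimate $\|(uv)_x\|_{L^1(0,T;L^2(\Omega))}\lesssim \|u\|_{X_T}\|v\|_{X_T}$ (a standard consequence of the $L^2(0,T;H^1)$ bound combined with $C([0,T];L^2)$ and Sobolev embedding), so the same fixed-point scheme closes provided $\|u_0\|_{L^2(\Omega)}$ is small enough that the ball radius can be taken small and the quadratic term stays subordinate; global-in-time existence for the nonlinear problem then follows from the a priori $L^2$ bound once the stabilization estimate of the later theorem is in force, but for this wellposedness statement on $[0,T]$ with small datum the local-plus-iteration argument (with radius kept small along the way) suffices.

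The main obstacle I expect is step (2): showing that the nonlocal feedback $U(t)=\int_0^L\tilde k(L,y)u(y,t)\,dy$ actually lands in the boundary regularity class (essentially $H^{1/3}(0,T)$, or whatever sharp space the chosen linear theory demands) with a bound by $\|u\|_{X_T}$, rather than merely in $L^2(0,T)$. Naively $u\in C([0,T];L^2(\Omega))$ only gives $U\in C([0,T])$, which is not obviously enough to feed back into the sharp trace estimates for the Dirichlet datum; the resolution is to differentiate $U$ in $t$, substitute $u_t = -u_x - u_{xxx}$ (resp.\ also $-uu_x$), integrate by parts in $x$ moving all derivatives onto $\tilde k(L,\cdot)$, and thereby express $U_t$ in terms of $u$ and its boundary traces without cost of regularity — the boundary contributions at $x=L$ are themselves $U,V$ and at $x=0$ vanish or are controlled by $u_x(0,\cdot)\in L^2(0,T)$ from the hidden-regularity estimate. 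This interpolation-type argument, together with the smoothness of $\tilde k$, is the technical heart; everything else is a by-now-routine assembly of known KdV estimates and a contraction mapping. A secondary, milder obstacle is bookkeeping the compatibility of the iteration across time slices so that the extension to arbitrary $T$ genuinely works — but linearity makes this essentially automatic once the one-step estimate is in hand.
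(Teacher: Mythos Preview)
Your approach differs substantially from the paper's, and the resolution you propose for your own ``main obstacle'' in step~(2) has a circularity problem. In the fixed-point map $\Gamma:v\mapsto u$, the boundary datum is $U(t)=\int_0^L\tilde k(L,y)\,v(y,t)\,dy$ with $v$ an \emph{arbitrary} element of $X_T$, not a solution of the equation; you therefore cannot substitute $v_t=-v_x-v_{xxx}$ to manufacture time regularity of $U$. For a generic $v\in C([0,T];L^2)\cap L^2(0,T;H^1)$ the scalar function $t\mapsto U(t)$ is merely continuous, which does not place it in the $H^{1/3}(0,T)$ class that the sharp inhomogeneous linear theory for the right-endpoint Dirichlet datum requires. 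As written, then, $\Gamma$ is not shown to be well-defined into $X_T$, and the contraction argument does not get off the ground.

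The paper sidesteps this entirely by a different route: rather than attack the plant with its nonhomogeneous feedback boundary conditions, it observes that wellposedness of the closed-loop system is \emph{equivalent}, via the bounded isomorphism $I-K$ and its inverse (Lemma~\ref{inverselem}), to wellposedness of the target system \eqref{HomKdVBurgers-1} (respectively \eqref{ch414} in the nonlinear case). That system has \emph{homogeneous} boundary conditions and carries instead the interior source $\tilde k_y(x,0)\,\tilde w_x(0,t)$, so no boundary-space bookkeeping is needed at all. The trace factor $\tilde w_x(0,t)$ is controlled by the hidden-regularity component $\|z_x\|_{C([0,L];L^2(0,T))}$ built into the solution space $Y_T$, and the Duhamel bound $\|a(\cdot)z_x(0,\cdot)\|_{L^1(0,T;L^2)}\le\sqrt{T}\,\|a\|_{L^2}\,\|z\|_{Y_T}$ (with $a=\tilde k_y(\cdot,0)$) closes the contraction for small $T$, independently of the size of the data in the linear case; the nonlinear case adds the standard bilinear estimate you mention. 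In short, the backstepping transformation is doing double duty here---it is the wellposedness device as well as the stabilization device---and that is the idea your proposal is missing.
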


\begin{rem} Indeed, our analysis in this paper also shows that if $u_0\in H^3(\Omega)$ and satisfies the compatibility conditions \begin{equation}\label{compcond}u_0(0)=0, u_0(L)=\int_0^L\tilde{k}(L,y)u_0(y)dy, u_0'(L)=\int_0^L\tilde{k}_x(L,y)u_0(y)dy,\end{equation} then the solution of \eqref{KdVBurgers} or the local solution of \eqref{nonlinearKdV} satisfies $u\in C([0,T];H^3(\Omega))\cap L^2(0,T;H^4(\Omega)).$
  One can also interpolate to get regularity in the fractional spaces.  For example, let $u_0\in H^s(\Omega)$ ($s\in [0,3]$) so that it satisfies the compatibility conditions $u_0(0)=0, u_0(L)=\int_0^L\tilde{k}(L,y)u_0(y)dy$ if $s\in [0,3/2]$ and the compatibility conditions \eqref{compcond} if $s\in (3/2,3].$ Then, the solution of \eqref{KdVBurgers} or the local solution of \eqref{nonlinearKdV} satisfies $u\in C([0,T];H^s(\Omega))\cap L^2(0,T;H^{s+1}(\Omega)).$
\end{rem}

\begin{thm}[Stabilization]\label{Linthm} Let $u_0\in L^2(\Omega)$.  Then, for sufficiently small $\lambda>0$, one has $\alpha=\lambda-\frac{1}{2}\|\tilde{k}_y(\cdot,0)\|_{L^2(\Omega)}^2>0$, where $\tilde{k}$ is given by \eqref{ktilde}, and the corresponding solution of \eqref{KdVBurgers} with the boundary feedback controllers \eqref{controllers} satisfies
  $ \|u(t)\|_{L^2(\Omega)}\lesssim \|u_0\|_{L^2(\Omega)}e^{-\alpha t}.$ Moreover, the same decay property is also true for the nonlinear KdV equation \eqref{nonlinearKdV} if $\|u_0\|_{L^2(\Omega)}$ is sufficiently small.
\end{thm}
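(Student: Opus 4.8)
The plan is to run the pseudo-backstepping scheme. Push the plant \eqref{KdVBurgers} forward through the Volterra transformation \eqref{transform} built from the smooth pseudo-kernel $\tilde k$ of \eqref{ktilde}. Since $\tilde k$ satisfies every relation in \eqref{kEq} \emph{except} the boundary condition $\tilde k_y(x,0)=0$, substituting it into the computation behind \eqref{23} collapses all terms but one: the interior integral, the $u_{xx}(0,t)$ term, and both diagonal traces at $y=x$ vanish, leaving $w_t+w_x+w_{xxx}+\lambda w=\tilde k_y(x,0)\,u_x(0,t)$ on $\Omega\times\mathbb{R}_+$. Moreover the feedbacks \eqref{controllers} are precisely what forces $w(0,t)=w(L,t)=w_x(L,t)=0$, and differentiating \eqref{transform} at $x=0$ (using $\tilde k(0,0)=0$ and $u(0,t)=0$) gives $u_x(0,t)=w_x(0,t)$, so $w$ solves the modified target system \eqref{HomKdVBurgers-1}. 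I would first carry out this identity, and the estimates below, for smooth solutions coming from $H^3$--compatible data (see the Remark after Theorem~\ref{Linthm0}), and then pass to arbitrary $u_0\in L^2(\Omega)$ by density together with the continuous dependence in Theorem~\ref{Linthm0}; the trace $u_x(0,t)\in L^2(0,T)$ is exactly the hidden regularity asserted there.

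The core is a one-line energy estimate on \eqref{HomKdVBurgers-1}. Multiplying the target equation by $w$ and integrating over $\Omega$, the transport term drops by the boundary conditions and the dispersive term contributes $\tfrac12 w_x(0,t)^2$, giving
\begin{equation*}
\tfrac12\tfrac{d}{dt}\|w(t)\|_{L^2(\Omega)}^2+\tfrac12 w_x(0,t)^2+\lambda\|w(t)\|_{L^2(\Omega)}^2=w_x(0,t)\int_0^L\tilde k_y(x,0)\,w(x,t)\,dx .
\end{equation*}
Bounding the right-hand side by Cauchy--Schwarz and Young's inequality absorbs the $\tfrac12 w_x(0,t)^2$ exactly and leaves $\tfrac{d}{dt}\|w\|_{L^2(\Omega)}^2\le-2\alpha\|w\|_{L^2(\Omega)}^2$ with $\alpha=\lambda-\tfrac12\|\tilde k_y(\cdot,0)\|_{L^2(\Omega)}^2$, so Gr\"onwall gives $\|w(t)\|_{L^2(\Omega)}\le\|w_0\|_{L^2(\Omega)}e^{-\alpha t}$. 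That $\alpha>0$ for small $\lambda$ is where the pseudo-kernel construction pays off: the datum in \eqref{Geqeps3} and the zero-order coefficient in \eqref{Geqeps1} are both proportional to $\lambda$, so the bounds on $\tilde k$ from the pseudo-backstepping section give $\|\tilde k_y(\cdot,0)\|_{L^2(\Omega)}=O(\lambda)$, whence $\alpha=\lambda\bigl(1-O(\lambda)\bigr)>0$ once $\lambda$ is small. Finally, the operator $(\mathcal Kf)(x):=\int_0^x\tilde k(x,y)f(y)\,dy$ in \eqref{transform} is bounded on $L^2(\Omega)$ and, $\tilde k$ being smooth, quasi-nilpotent, so $I-\mathcal K$ has a bounded inverse -- this is the abstract inverse transformation obtained by succession in Lemma~\ref{inverselem}. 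Hence $\|w_0\|_{L^2(\Omega)}\lesssim\|u_0\|_{L^2(\Omega)}$ and $\|u(t)\|_{L^2(\Omega)}\lesssim\|w(t)\|_{L^2(\Omega)}\le\|w_0\|_{L^2(\Omega)}e^{-\alpha t}\lesssim\|u_0\|_{L^2(\Omega)}e^{-\alpha t}$, which is the claim for \eqref{KdVBurgers}.

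For the nonlinear equation \eqref{nonlinearKdV} I would treat $uu_x$ perturbatively: the transformed equation picks up the additional forcing $-uu_x+\int_0^x\tilde k(x,y)(uu_x)(y,t)\,dy$, and testing against $w$ produces cubic terms which, after integration by parts, the boundary bound $|u(L,t)|=|U(t)|\lesssim\|u(t)\|_{L^2(\Omega)}$, and $u=(I-\mathcal K)^{-1}w$, are dominated by a superlinear power of $\|w\|_{L^2(\Omega)}$ once one also uses the Kato smoothing estimate $\int_0^T\|w_x(t)\|_{L^2(\Omega)}^2\,dt\lesssim\|w_0\|_{L^2(\Omega)}^2+\int_0^T\|w(t)\|_{L^2(\Omega)}^2\,dt$ (from the multiplier $x\,w$). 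A continuation argument then gives, for $\|u_0\|_{L^2(\Omega)}$ -- hence $\|w_0\|_{L^2(\Omega)}$ -- small, global existence of the local solution and the same exponential decay with rate $\alpha$. The main obstacle is precisely this last balance: one must quantify the smoothing and choose the smallness threshold so that the nonlinear feedback cannot overcome the linear dissipation producing $\alpha$; the purely linear statement, by contrast, is essentially the displayed identity above.
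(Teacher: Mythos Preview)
Your treatment of the linear case is essentially identical to the paper's: the target system \eqref{HomKdVBurgers-1}, the energy identity with the $\tfrac12 w_x(0,t)^2$ absorbed by Young's inequality, the resulting Gr\"onwall estimate, the positivity of $\alpha$ via $\|\tilde k_y(\cdot,0)\|_{L^2}=O(\lambda)$ (the paper's Lemma~\ref{alemlambda}), and the return to $u$ through the bounded inverse $(I-K)^{-1}$ from Lemma~\ref{inverselem} all match Section~\ref{ProofofThm}.

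For the nonlinear part your outline diverges from the paper's argument and is somewhat looser. The paper does \emph{not} invoke the Kato smoothing $\int_0^T\|w_x\|_{L^2}^2\,dt$ from the $xw$ multiplier. Instead it writes $u=\tilde w+v$ with $v=\Phi\tilde w$ and uses the smoothing built into the Volterra inverse: by \eqref{l0} and \eqref{linftyrem} one has $\|v(t)\|_{L^2}+\|v_x(t)\|_{L^\infty}\lesssim\|\tilde w(t)\|_{L^2}$ pointwise in time. With this, every cubic term in \eqref{ch416}--\eqref{four-4} is bounded by $C\|\tilde w(t)\|_{L^2}^3$, and one lands directly on the Bernoulli-type differential inequality $y'+2\alpha y\le c\,y^{3/2}$ with $y=\|\tilde w(t)\|_{L^2}^2$, which is solved explicitly under the smallness assumption $\|\tilde w_0\|_{L^2}<\alpha/c$. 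Your Kato-smoothing route trades a pointwise ODE for a time-integrated estimate, making the closure for exponential decay less transparent; the paper's observation that $\Phi$ already gains a derivative is precisely what lets one avoid that detour. Your mention of the boundary contribution $|u(L,t)|$ is also unnecessary in the paper's setup, since the target variable $\tilde w$ satisfies homogeneous boundary conditions and the cubic boundary terms vanish outright.
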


\begin{rem}
The proof of Theorem \ref{Linthm} is given in the next section.  Table \ref{table1} gives some examples where exponential stabilization can be achieved.  For example, when $\lambda=0.03$, the decay rate is approximately of order $\mathcal{O}(e^{-0.18 t})$ on a domain of length $L=2\pi$.  The exponential decay rate is substantially small (see Table \ref{table1}) relative to the decay rates one can get by controlling the equation from the left end-point with the same type of boundary conditions. Indeed, what matters is is not where the controller is located,but rather the number of boundary conditions specified on the opposite side of the boundary.  For example, if one specified two boundary conditions at the left and only one boundary condition at the right, then it would be easier to control from right and more difficult to control from left, in contrast to the problem studied in this paper.
\end{rem}
\begin{table}
  \begin{tabular}{|c||c|}
  \hline
  $\lambda$ &  $\alpha=\lambda-\frac{1}{2}\|\tilde{k}_y(\cdot,0)\|_{L^2(\Omega)}^2$  \\\hline\hline
   0.01 &  \textcolor[rgb]{0.00,0.50,0.00}{0.00954938} \\\hline
   0.02 &  \textcolor[rgb]{0.00,0.50,0.00}{0.0167563} \\\hline
   0.03 &  \textcolor[rgb]{0.00,0.50,0.00}{0.0181985} \\\hline
   0.04 &  \textcolor[rgb]{0.00,0.50,0.00}{0.00844268} \\\hline
   0.05 &   \textcolor[rgb]{1.00,0.00,0.00}{-0.0203987} \\\hline
   0.10 & \textcolor[rgb]{1.00,0.00,0.00}{-0.961935} \\\hline
   1 &   \textcolor[rgb]{1.00,0.00,0.00}{-83925.8} \\\hline
  \hline
\end{tabular}
\caption{\label{table1}Numerical experiments on a domain of critical length $L=2\pi$}
\end{table}
%
\section{Stabilization} In this section, we prove Theorem \ref{Linthm}.  First, we prove the existence of the pseudo-kernel and the abstract inverse transformation.  Secondly, by using the multiplier method, we obtain the stabilization for suitable $\lambda.$  The multiplier method is applied only formally, but the calculations can be justified by a standard density argument and the regularity results proved in the next section.
\subsection{Linearised model}\label{ProofofThm}
The sought-after solution of \eqref{Geqeps1}-\eqref{Geqeps3} can be constructed by applying the successive approximations technique to the integral equation
\begin{equation}\label{GepsInt}
  \tilde{G}(s,t) = \frac{{\lambda}}{3}st+\frac{1}{3}\int_0^t\int_0^s\int_0^\omega (-\tilde{G}_{ttt} + 3\tilde{G}_{stt}  - \tilde{G}_{t} -\lambda \tilde{G})(\xi,\eta)d\xi d\omega d\eta.
\end{equation}
Indeed, we have the following lemma.
\begin{lem}\label{lemback} There exists a $C^\infty$-function $\tilde{G}$ such that $\tilde{G}$ solves the integral equation \eqref{GepsInt} as well as the boundary value problem given in \eqref{Geqeps1}-\eqref{Geqeps3}.
\end{lem}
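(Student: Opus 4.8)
The plan is to verify that the integral equation \eqref{GepsInt} is amenable to a standard Picard iteration in a suitable Banach space of smooth functions on the closed triangle $\mathcal{T}_0$, and then to check that a fixed point of the associated operator is automatically a classical $C^\infty$ solution of \eqref{Geqeps1}--\eqref{Geqeps3}. First I would set up the iteration: let $\tilde G^{(0)}(s,t)\equiv \tfrac{\lambda}{3}st$ and define $\tilde G^{(n+1)}$ by plugging $\tilde G^{(n)}$ into the right-hand side of \eqref{GepsInt}. The difficulty is that the integral operator on the right involves third-order derivatives of the unknown, so a naive contraction estimate in $C(\mathcal{T}_0)$ or even in $C^k(\mathcal{T}_0)$ does not close. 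The way around this is to work in a space of real-analytic-type functions, i.e. to estimate all derivatives simultaneously with carefully chosen weights, or — more in the spirit of the classical backstepping literature — to expand $\tilde G=\sum_{n\ge 0}\tilde G^{(n)}$ as a series in powers of $\lambda$ and prove that the formal series has the form $\tilde G(s,t)=\sum_{n\ge 0}\lambda^{n+1} g_n(s,t)$ with each $g_n$ a polynomial (hence $C^\infty$), and that the series and all of its term-by-term derivatives converge uniformly on $\mathcal{T}_0$. Because the triple integral $\int_0^t\int_0^s\int_0^\omega$ gains three powers of the "size" of the domain at each step while differentiation at most converts a polynomial of degree $d$ to one of degree $d-1$, one obtains bounds of the form $\|g_n\|_{C^m(\mathcal{T}_0)}\le C_m \dfrac{(CL)^{3n}}{(3n)!}\cdot(\text{polynomial correction})$, which guarantees absolute and uniform convergence of every derivative. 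This is the step I expect to be the main obstacle: getting the book-keeping of the factorial gain from the iterated integrals to dominate the loss from the three derivatives $\tilde G_{ttt}$, $\tilde G_{stt}$, $\tilde G_t$ appearing inside the integrand, uniformly in the order of differentiation so that the limit is genuinely $C^\infty$.

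Once convergence in $C^\infty(\mathcal{T}_0)$ is established, I would verify that the limit $\tilde G$ satisfies the integral equation \eqref{GepsInt} (immediate by passing to the limit under the integral sign, which is justified by uniform convergence) and then differentiate \eqref{GepsInt} directly to recover the PDE and boundary conditions. Setting $t=0$ in \eqref{GepsInt} gives $\tilde G(s,0)=0$, and setting $s=0$ gives $\tilde G(0,t)=0$; differentiating \eqref{GepsInt} in $s$ and setting $s=0$ yields $\tilde G_s(0,t)=\tfrac{\lambda}{3}t$, so the boundary conditions \eqref{Geqeps2}--\eqref{Geqeps3} hold. For the PDE, applying $\partial_s^3$ to \eqref{GepsInt} (the operator that "inverts" the triple integral $\tfrac13\int_0^t\int_0^s\int_0^\omega$ up to the factor $3$) should collapse the right-hand side to $-\tilde G_{ttt}+3\tilde G_{stt}-\tilde G_t-\lambda\tilde G$ plus lower-order boundary terms that vanish; rearranging gives exactly \eqref{Geqeps1}. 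One has to be a little careful here because \eqref{GepsInt} is an integral equation in which the integrand itself contains derivatives of $\tilde G$, so the "inversion" is not literally applying $\partial_s^3$ — rather I would argue that the fixed-point equation was obtained by integrating \eqref{Geqeps1} three times in the $s$-direction against the boundary data, and that this integration is reversible on smooth functions; checking that the particular combination of limits of integration reproduces both the correct PDE and the correct homogeneous/inhomogeneous boundary data on the two legs of the triangle is the bulk of the verification. Finally, smoothness of $\tilde G$ up to the boundary of $\mathcal{T}_0$ (including the diagonal edge $s+t=L$ and the two coordinate edges) is inherited from the $C^\infty(\overline{\mathcal{T}_0})$ convergence of the series, so $\tilde k(x,y):=\tilde G(x-y,y)$ and its derivatives used in \eqref{controllers} are well-defined and continuous, which is exactly what the wellposedness and stabilization theorems downstream require.
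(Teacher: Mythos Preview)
Your proposal follows the same strategy as the paper: successive approximations applied to \eqref{GepsInt}, noting that each iterate is a polynomial and that the triple integration supplies enough factorial gain to absorb the three derivatives in the integrand. The paper carries out precisely the ``bookkeeping'' you flag as the main obstacle, via a device you do not mention: it splits the integral operator as $P=P_{-2}+P_{-1}+P_0+P_1$, one piece for each of the four terms $-\tilde G_{ttt}$, $3\tilde G_{stt}$, $-\tilde G_t$, $-\lambda\tilde G$, observes that each $P_i$ sends a monomial $s^mt^k$ to a single monomial with an explicit coefficient, expands $P^n(st)$ as a sum of $4^n$ such monomials, and bounds their coefficients inductively by $\tilde\lambda^n/\bigl((n+1)!(\sigma+1)!\bigr)$. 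This yields $\|H^n\|_\infty\le 4^n\tilde\lambda^n L^{3n+2}/(n+1)!$ and analogous summable bounds for every partial derivative. One caution: your alternative suggestion to write $\tilde G=\sum_{n\ge0}\lambda^{n+1}g_n(s,t)$ as a pure power series in $\lambda$ does not work here, because only one of the four pieces of $P$ carries a factor of $\lambda$; the iterates are polynomials in $(s,t)$ but not homogeneous in $\lambda$, so the series must be organized by iteration count, not by $\lambda$-degree. Your discussion of recovering \eqref{Geqeps1}--\eqref{Geqeps3} from the integral equation is sound (the inverting operator is $3\partial_t\partial_s^2$, matching the $3\tilde G_{sst}$ term, rather than $\partial_s^3$); the paper leaves this equivalence implicit.
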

\begin{proof}Let $P$ be defined by
\begin{equation}\label{aP}
  (P f)(s,t) = \frac{1}{3}\int_0^t\int_0^s\int_0^\omega (-f_{ttt} + 3f_{stt}  - f_{t} -\lambda f)(\xi,\eta)d\xi d\omega d\eta.
\end{equation}
By \eqref{GepsInt}, we need to solve the equation $\tilde{G}(s,t)=\frac{{\lambda}}{3}st+P\tilde{G}(s,t).$ Define $\tilde{G}^0\equiv 0,$ $\displaystyle\tilde{G}^1(s,t)=\frac{{\lambda}}{3}st,$ and $\tilde{G}^{n+1}=\tilde{G}^1+P \tilde{G}^n.$ Then for $n\geq 1$, $\tilde{G}^{n+1}-\tilde{G}^{n} = P(\tilde{G}^{n}-\tilde{G}^{n-1}).$ So if we define $H^0(s,t)=st$ and $H^{n+1}=PH^n$, we get $H^n=\frac{3}{\lambda}(\tilde{G}^{n+1}-\tilde{G}^{n}).$ Moreover, for $j>i,$
\begin{equation}\label{aCauchy}
\tilde{G}^j-\tilde{G}^i= \sum_{n=i}^{n=j-1}\tilde{G}^{n+1}-\tilde{G}^{n}=\frac{\lambda}{3}\sum_{n=i}^{n=j-1}H^{n}.
\end{equation}
Let $\| \cdot \|_{\infty}$ denote the supremum norm of a function on the triangle $T_0$. It follows from \eqref{aCauchy} that to prove $\tilde{G}_n$ (and its partial derivatives) is Cauchy with respect to the norm $\| \cdot \|_{\infty}$ it is enough to show $H^n$ (and its partial derivatives) is an absolutely summable sequence with respect to the same norm.

To show $H^n$'s are absolutely summable, let us first write $P$ as the sum of four operators $P= P_{-2}+P_{-1}+P_0+P_1,$ where
$$P_{-2}f= \frac{1}{3}\int_0^t\int_0^s\int_0^\omega -f_{ttt}(\xi,\eta) d\xi d\omega'd\eta,\,P_{-1}f= \int_0^t\int_0^s\int_0^\omega f_{stt}(\xi,\eta) d\xi d\omega'd\eta,$$
$$P_{0}f= \frac{1}{3}\int_0^t\int_0^s\int_0^\omega -f_{t}(\xi,\eta) d\xi d\omega'd\eta,\,P_{1}f= \frac{1}{3}\int_0^t\int_0^s\int_0^\omega -\lambda f(\xi,\eta) d\xi d\omega'd\eta.$$
Then
\begin{equation}\label{aproduct}
H^n=P^nH^0=(P_{-2}+P_{-1}+ P_0+P_1)^nst=\sum_{r=1}^{4^n}R_{r,n}st
\end{equation}
where $R_{r,n}:=P_{j_{r,n}}P_{j_{r,n-1}}\cdot\cdot\cdot P_{j_{r,1}}$, $j_{r,i} \in \{-2,-1,0,1\}$.  Observe that for positive integers $m$ and nonnegative integers $k$
\begin{equation}\label{aPi}
P_{-1}s^m t^k= c_{-1}s^{m+1}t^{k-1} \; \text{and} \; P_{i}s^m t^k= c_{i}s^{m+2} t^{k+i} \; \text{for} \; i=-2,0,1,
\end{equation}
where
\begin{equation}\label{acm2}
c_{-2}=
 	\begin{cases}
 	0 & \text { if } k\leq 2,\\
    -\frac{k(k-1)}{3(m+1)(m+2)} & \text { if } k > 2,\\
 	\end{cases}
\end{equation}
\begin{equation}\label{acm1}
c_{-1}=
 	\begin{cases}
 	0 & \text { if } k\leq 1,\\
    \frac{k}{(m+1)} & \text { if } k > 1,\\
 	\end{cases}
\end{equation}
\begin{equation}\label{ac0}
c_{0}=-\frac{1}{3(m+1)(m+2)},
\end{equation}
\begin{equation}\label{ac1}
c_{1}=-\frac{\lambda}{3(m+1)(m+2)(k+1)}.
\end{equation}
Let $\sigma=\sigma(n,r)=\sum_{i=1}^n j_{r,i}$. From \eqref{aPi}-\eqref{ac1} one can easily see that for each $n$ and $r$
\begin{equation}\label{amonomials}
R_{r,n}st=
 	\begin{cases}
 	0 & \text { if } \sigma <-1,\\
    C_{r,n}s^\beta t^{\sigma+1} & \text { if } \sigma \geq -1\\
 	\end{cases}
\end{equation}
where $n+1\leq \beta\leq 2n+1$ and $C_{r,n}$ is a constant which only depends on $n$ and $r$.

Let $\tilde{\lambda}=\max\{1,\lambda\}$. We claim that for each $n$ and $r$,
\begin{equation}\label{aclaim}
|C_{r,n}|\leq \frac{\tilde{\lambda}^n}{(n+1)!(\sigma+1)!}.
\end{equation}
Taking $m=1$, $k=1$ in  \eqref{aPi}-\eqref{ac1}, one can check that the claim holds for $n=1$. Suppose it holds for $n=\ell-1$ and for all $r \in \{1,2,.. ,4^{\ell -1}\}$. Then for $n=\ell$ and $r^* \in \{1,2,.. ,4^{\ell}\}$, using \eqref{aPi} and \eqref{amonomials}, we obtain $R_{r^*,\ell}st=P_i R_{r,\ell-1}st= C_{r,\ell-1}P_i s^\beta t^{\sigma+1}=C_{r,\ell-1}c_i s^{\beta^*} t^{\sigma^*+1}$
for some $i\in\{-2,-1,0,1\}$ and $r \in \{1,2,.. ,4^{\ell -1}\}$, where $\beta^*$ is either $\beta+1$ or $\beta+2$, $\sigma^*=\sigma +i$. By the induction assumption $C_{r,\ell-1}\leq \frac{\tilde{\lambda}^{\ell-1}}{\ell!(\sigma+1)!}.$  Moreover \eqref{acm2}-\eqref{ac1} and the fact that $\beta\geq \ell$  imply $|c_i|\leq \frac{\sigma+1}{\ell+1}$ for $i=-1,-2$, $|c_0| <\frac{1}{\ell+1}$, and $|c_1|< \frac{\lambda}{(\sigma+2)(\ell+1)}$. Hence for each $i\in \{-2,-1,0,1\}$ we get $|C_{r^*,\ell}|= |C_{r,(\ell-1)}c_i| \leq \frac{\tilde{\lambda}^{\ell}}{(\ell+1)!(\sigma+i+1)!}=\frac{\tilde{\lambda}^{\ell}}{(\ell+1)!(\sigma^*+1)!}$, which proves that the claim holds for $n=\ell$ as well.

By \eqref{aproduct}, \eqref{amonomials}, \eqref{aclaim} and the fact that $0\leq s, t \leq L$ in the triangle $T_0$, we obtain \begin{equation}\label{Hnest0}\|H^n\|_{\infty}\leq \frac{4^n\tilde{\lambda}^nL^{3n+2}}{(n+1)!}\end{equation} which is summable. Moreover, since $H^n$ is a linear combination of $4^n$ monomials of the form $s^\beta t^{\sigma+1}$ with $\beta\leq 2n+1$ and $\sigma\leq n$,
any partial derivative $\partial^a_s \partial^b_t H^n$ of $H^n$ will be absolutely less than \begin{equation}\label{Hnest}\displaystyle\frac{(2n+1)^a (n+1)^b 4^n\tilde{\lambda}^nL^{3n+2-a-b}}{(n+1)!}\end{equation} which is also summable.
\end{proof}
Now, we define the pseudo-kernel by \begin{equation}\label{ktilde}\tilde{k}(x,y):=\tilde{G}(x-y,y)\end{equation} and consider the transformation given by \begin{equation}\label{mod-transform}\tilde{w}(x,t)\equiv u(x,t)-\int_0^x\tilde{k}(x,y)u(y,t)dy.\end{equation}
\begin{figure}[H]
  \centering
   \includegraphics[scale=0.75]{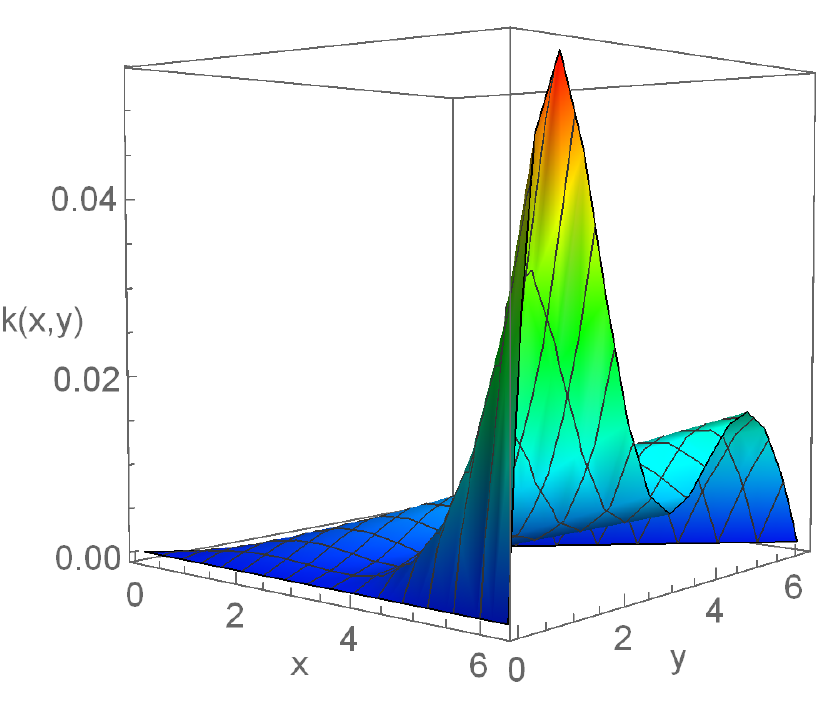}
  \caption{Pseudo-kernel $\tilde{k}$ when $\lambda=0.01$ ($L=2\pi$)}\label{Impk}
\end{figure}
\begin{figure}[H]
  \centering
   \includegraphics[scale=0.75]{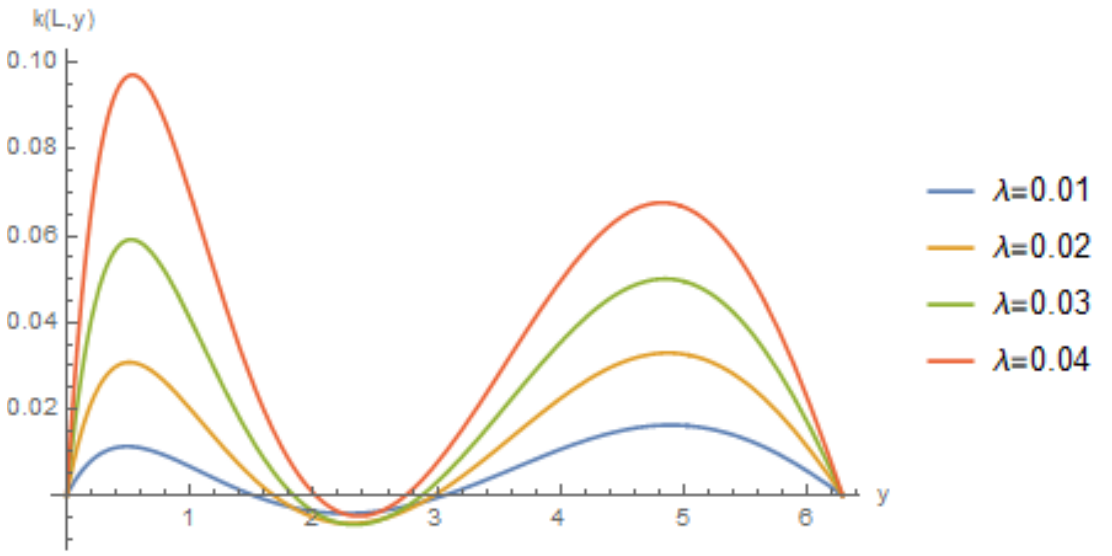}
  \caption{Control effort at the Dirichlet b.c. for different $\lambda$  ($L=2\pi$)}\label{effortk1y}
\end{figure}
\begin{figure}[H]
  \centering
   \includegraphics[scale=0.75]{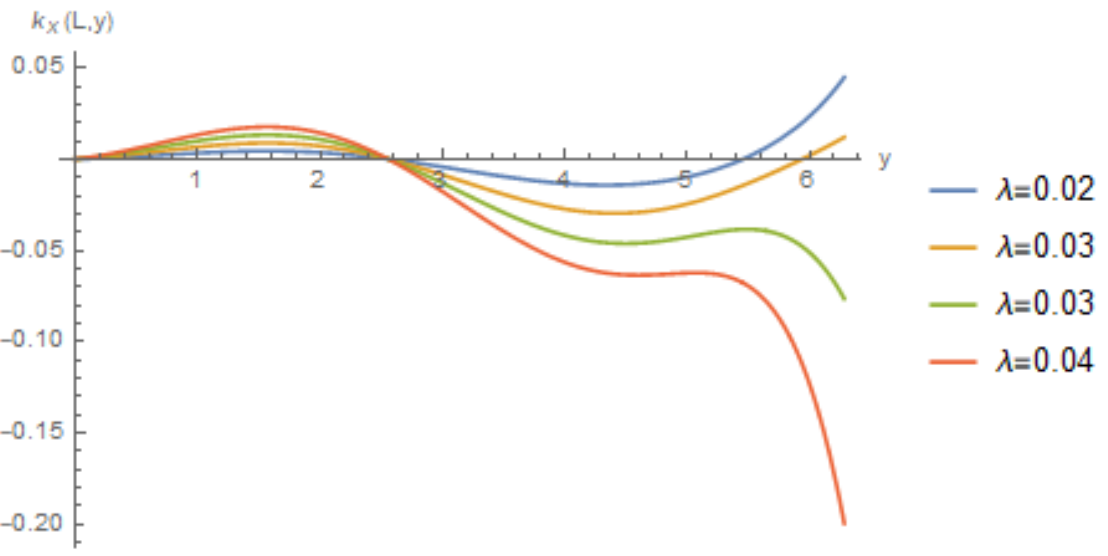}
  \caption{Control effort at the Neumann b.c. for different $\lambda$ ($L=2\pi$)}\label{effortky1y}
\end{figure}
Note that we have $\tilde{u}_x(0,t)=\tilde{w}_x(0,t)$ by the boundary conditions of $\tilde{k}$.  Using this fact, we can rewrite the modified target system as
\begin{equation}\label{HomKdVBurgers-1}
 	\begin{cases}
 	\displaystyle \tilde{w}_{t} +\tilde{ w}_{x} + \tilde{w}_{xxx} + \lambda \tilde{w} = \tilde{k}_y(x,0)\tilde{w}_x(0,t) & \text { in } \Omega\times \mathbb{R_+},\\
    \tilde{w}(0,t) = \tilde{w}(L,t) = \tilde{w}_{x}(L,t) = 0 & \text { in } \mathbb{R_+},\\
    \tilde{w}(x,0)=\tilde{w}_0(x):= u_0-\int_0^x\tilde{k}(x,y)u_0(y)dy & \text { in } \Omega.
 	\end{cases}
\end{equation}
Multiplying the above model by $\tilde{w}$ and integrating over $(0,1)$, using the Cauchy-Schwarz inequality, we obtain
\begin{multline}\label{HomKdVBurgers-2}
 	\frac{1}{2}\frac{d}{dt}\|\tilde{w}(t)\|_{L^2(\Omega)}^2+\lambda\|\tilde{w}(t)\|_{L^2(\Omega)}^2
 \le -\frac{1}{2}|\tilde{w}_x(0,t)|^2+\int_0^L\tilde{k}_y(x,0)\tilde{w}_x(0,t)\tilde{w}(x,t)dx\\
 \le \cancel{-\frac{1}{2}|\tilde{w}_x(0,t)|^2}+\cancel{\frac{1}{2}|\tilde{w}_x(0,t)|^2}+\frac{1}{2}\left(\int_0^L|\tilde{k}_y(x,0)||\tilde{w}(x,t)|dx\right)^2.
\end{multline}
Since $\tilde{k}$ is smooth on the compact set $\mathcal{T}$, we  have
\begin{equation}\label{HomKdVBurgers-3}
 	\frac{1}{2}\frac{d}{dt}\|\tilde{w}(t)\|_{L^2(\Omega)}^2+ \left(\lambda-\frac{1}{2}\|\tilde{k}_y(\cdot,0)\|_{L^2(\Omega)}^2\right)\|\tilde{w}(t)\|_{L^2(\Omega)}^2\le 0.
\end{equation}
It follows that
\begin{equation}\label{HomKdVBurgers-4}
 	\|\tilde{w}(t)\|_{L^2(\Omega)}^2\le \|\tilde{w}_0\|_{L^2(\Omega)}^2e^{-2{\alpha}t} ,
\end{equation} where
${\alpha}\equiv \lambda-\frac{1}{2}\|\tilde{k}_y(\cdot,0)\|_{L^2(\Omega)}^2.$
The graph of the function $\tilde{k}_y(\cdot,0)$ is depicted in Figure \ref{ky0tilde} on a domain of length $L=2\pi$.
\begin{figure}[H]
  \centering
   \includegraphics[scale=0.75]{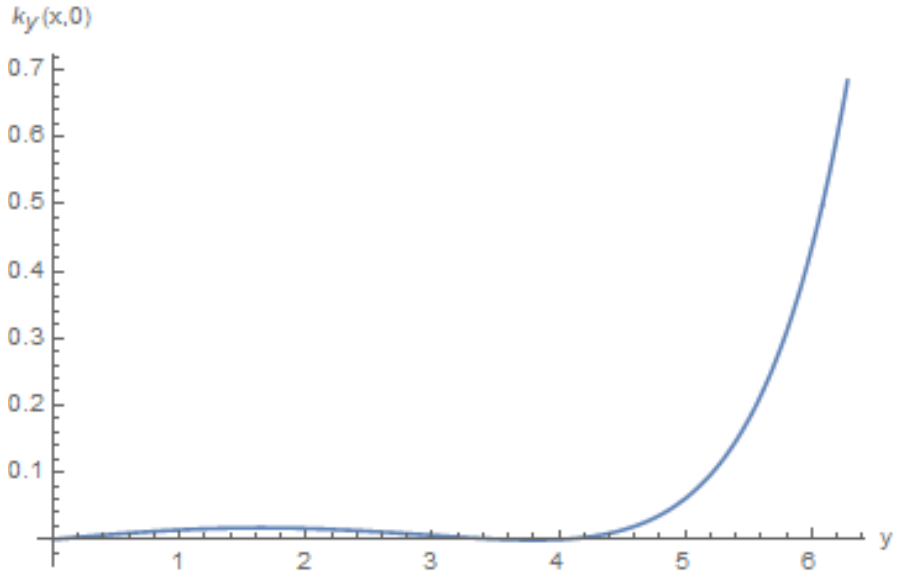}
  \caption{Pseudo-kernel $\tilde{k}$ when $\lambda=0.03$ ($L=2\pi$)}\label{ky0tilde}
\end{figure}

By taking $L^2(\Omega)$ norms of both sides of \eqref{mod-transform} (with $t=0$) and using the Cauchy-Schwarz inequality, we get \begin{equation}\label{w0u0} \|\tilde{w}_0\|_{L^2(\Omega)}\le \left(1+\|\tilde{k}\|_{L^2(T)}\right) \|u_0\|_{L^2(\Omega)}.\end{equation}

Let $K:H^l(\Omega)\rightarrow H^l(\Omega)$ ($l\ge 0$) be the integral operator defined by $(K\varphi)(x):=\int_0^x\tilde{k}(x,y)\varphi(y)dy.$  It is not difficult to prove that the operator $I-K$ is invertible from $H^l(\Omega)\rightarrow H^l(\Omega)$ (for $l\ge 0$) with a bounded inverse.  This is proved in a general setting in the lemma below:

\begin{lem}\label{inverselem}
  $I-K$ is invertible with a bounded inverse from $H^l(\Omega)\rightarrow H^l(\Omega)$ ($l\ge 0$).
\end{lem}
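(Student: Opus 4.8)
The plan is to invert $I-K$ by its Neumann series $\sum_{n\ge 0}K^{n}$ — equivalently, to solve $(I-K)\psi=\varphi$ by the successive approximations $\psi^{0}=\varphi$, $\psi^{n+1}=\varphi+K\psi^{n}$ (the ``succession'' referred to in the statement, as in \eqref{GepsInt}). Since $\psi^{N}=\sum_{n=0}^{N}K^{n}\varphi$, everything reduces to showing that $K$ is \emph{quasinilpotent} on $H^{l}(\Omega)$, i.e. $\sum_{n\ge 0}\|K^{n}\|_{\mathcal{L}(H^{l}(\Omega))}<\infty$; then $S:=\sum_{n\ge0}K^{n}$ converges in $\mathcal{L}(H^{l}(\Omega))$ and, because $K^{N}\to 0$ in operator norm, $(I-K)S=S(I-K)=I$, so $I-K$ is boundedly invertible with inverse $S$, and $\psi^{N}\to S\varphi$.

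First I would recall the classical quasinilpotence of a Volterra operator at the $L^{2}$ level. Since $\tilde{k}$ is continuous on the compact triangle $\mathcal{T}$, it is bounded, $|\tilde{k}|\le M$; Cauchy--Schwarz then gives $\|K\varphi\|_{L^{2}(0,x)}^{2}\le M^{2}L\int_{0}^{x}\|\varphi\|_{L^{2}(0,s)}^{2}\,ds$ for all $x\in[0,L]$, and iterating this inequality yields $\|K^{n}\varphi\|_{L^{2}(\Omega)}\le \tfrac{(ML)^{n}}{\sqrt{n!}}\,\|\varphi\|_{L^{2}(\Omega)}$, which settles the case $l=0$. To pass to $l>0$ I would use that $K$ is in fact \emph{smoothing}, and here the boundary conditions \eqref{Geqeps2}--\eqref{Geqeps3} are essential: since $\tilde{k}(x,y)=\tilde{G}(x-y,y)$ one has $\tilde{k}(x,x)=\tilde{G}(0,x)=0$ and $\tilde{k}_{x}(x,x)=\tilde{G}_{s}(0,x)=\tfrac{\lambda}{3}x$, so differentiating $(K\varphi)(x)=\int_{0}^{x}\tilde{k}(x,y)\varphi(y)\,dy$ gives $(K\varphi)'(x)=\int_{0}^{x}\tilde{k}_{x}(x,y)\varphi(y)\,dy$ and $(K\varphi)''(x)=\tfrac{\lambda}{3}x\,\varphi(x)+\int_{0}^{x}\tilde{k}_{xx}(x,y)\varphi(y)\,dy$. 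Iterating the variable-endpoint differentiation rule, in $\partial_{x}^{j}(K\varphi)$ the boundary contributions involve only $\varphi,\dots,\varphi^{(j-2)}$ — the would-be top term $\tilde{k}(x,x)\varphi^{(j-1)}$ is annihilated — while the surviving integral term needs only $\varphi\in L^{2}$. Hence $K:H^{m}(\Omega)\to H^{m+2}(\Omega)$ boundedly for every integer $m\ge 0$, and by composition $K^{m}:L^{2}(\Omega)\to H^{2m}(\Omega)$; in particular, for any real $l\ge 0$ and any integer $m$ with $2m\ge l$, $K^{m}\in\mathcal{L}(L^{2}(\Omega),H^{l}(\Omega))$.

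To finish, fix $l\ge 0$ and such an $m$; for $n>m$ factor $K^{n}\varphi=K^{m}(K^{\,n-m}\varphi)$ and combine the two estimates,
\[
\|K^{n}\varphi\|_{H^{l}(\Omega)}\le\|K^{m}\|_{\mathcal{L}(L^{2},H^{l})}\,\|K^{\,n-m}\varphi\|_{L^{2}(\Omega)}\le C_{l}\,\frac{(ML)^{\,n-m}}{\sqrt{(n-m)!}}\,\|\varphi\|_{H^{l}(\Omega)},
\]
which is summable in $n$; together with boundedness of $K$ on $H^{l}(\Omega)$ (for integer $l$ from the smoothing, since $K:H^{l}\to H^{l+2}\hookrightarrow H^{l}$, and by interpolation for non-integer $l$, using $\|K^{n}\|_{\mathcal{L}(H^{l})}\le\|K^{n}\|_{\mathcal{L}(H^{\lfloor l\rfloor})}^{1-\theta}\|K^{n}\|_{\mathcal{L}(H^{\lceil l\rceil})}^{\theta}$ and Hölder's inequality) this gives $\sum_{n\ge0}\|K^{n}\|_{\mathcal{L}(H^{l}(\Omega))}<\infty$, completing the argument. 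The hard part is the smoothing step: $L^{2}$-quasinilpotence is routine, but it does not upgrade to $H^{l}$ for free, because differentiating the Volterra transform spawns boundary terms from the moving endpoint; these are tamed precisely by the pseudo-kernel's boundary conditions, and keeping track of how many derivatives of $\varphi$ survive in $\partial_{x}^{j}(K\varphi)$ is the crux. If one prefers an argument that does not use these specific boundary conditions — the ``general setting'' — one can instead estimate the iterated kernels $\tilde{k}_{n}$ of $K^{n}$ directly, showing $|\partial_{x}^{i}\tilde{k}_{n}(x,y)|\lesssim C^{n}(x-y)^{\,n-1-i}/(n-1-i)!$, so that every boundary term vanishes once $n\ge i+2$; this again yields summability in $H^{l}$.
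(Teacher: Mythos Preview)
Your argument is correct. Both your proof and the paper's rest on the same two pillars --- classical Volterra quasinilpotence on $L^{2}$, and the two-derivative smoothing of $K$ coming from $\tilde{k}(x,x)=0$, $\tilde{k}_{x}(x,x)=\tfrac{\lambda}{3}x$ --- but the execution differs. The paper does not estimate the Neumann series on $H^{l}$ term by term; instead it writes $(I-K)^{-1}=I+\Phi$ with $\Phi\psi=v$ the solution of the Volterra equation $v=K\psi+Kv$, obtains $\|v\|_{L^{2}}\le C_{0}\|\psi\|_{L^{2}}$ by succession once and for all, and then \emph{bootstraps}: differentiating the closed identity $v_{x}=\int_{0}^{x}\tilde{k}_{x}(x,y)(\psi+v)\,dy$ (and its higher analogues) immediately gives $\|v\|_{H^{l}}\lesssim\|\psi\|_{H^{\max(l-2,0)}}$ using only the $L^{2}$ bound already in hand. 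This is a little more economical --- no need to track $K^{n}$ for large $n$ in $H^{l}$ --- and it produces directly the sharper mapping $\Phi:H^{l-2}\to H^{l}$ that the paper records in the remark following the lemma and uses later. Your route gives the same conclusions and additionally makes the decay rate $\|K^{n}\|_{L^{2}\to L^{2}}\lesssim(ML)^{n}/\sqrt{n!}$ explicit; your final observation about iterated kernels (that $\partial_{x}^{i}\tilde{k}_{n}$ vanishes on the diagonal once $n>i+1$, so the boundary conditions on $\tilde{k}$ are not strictly needed for invertibility on $H^{l}$) is a genuine addition not in the paper.
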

\begin{rem}
  The above lemma can be expressed in a sharper form.  Indeed, the proof below shows that $\Phi$ is a bounded operator from $L^2(\Omega)\rightarrow H^l(\Omega)$ ($l=0,1,2$) and it is a bounded operator from $H^{l-2}(\Omega)\rightarrow H^{l}(\Omega)$ ($l> 2$).
\end{rem}
\begin{proof}
The above lemma can be proven by slightly modifying the proof of \cite[Lemma 2.4]{Liu03}.  However, we will still give a brief proof here since we will need to refer to some crucial details of the proof of this lemma later in the proofs of the stabilization and well-posedness results.

To this end, let us first consider the case $l=0$ and let $\psi=(I-K)\varphi$ for some $\varphi\in L^2(\Omega)$.  The idea is to first write $\psi=\varphi-v$ where $v=K\varphi.$  Note that then, $$\psi(x)=\varphi(x)-[K\varphi](x)=(\psi(x)+v(x))-\int_0^x\tilde{k}(x,y)(\psi(y)+v(y))dy.$$  This gives $$v(x)=\int_0^x\tilde{k}(x,y)\psi(y)dy+\int_0^x\tilde{k}(x,y)v(y)dy.$$  Given a fixed $\psi$, one can solve this equation via succession (see \cite[Lemma 2.4]{Liu03} for the details of the succession argument).  This implicitly defines a linear operator $\Phi:\psi\mapsto v$ on $L^2(\Omega)$ with the property that $\Phi$ is bounded, i.e., there exists $C_0>0$ such that \begin{equation}\label{l0}\|v\|_{L^2(\Omega)}\le C_0\|\psi\|_{L^2(\Omega)},\end{equation} where $C_0$ depends only on $\|\tilde{k}\|_{L^\infty(\mathcal{T})}$.  But then, $\varphi$ is simply equal to $(I+\Phi)\psi$, and therefore $(I-K)^{-1}$ exists, equals $I+\Phi$, and is bounded.  By differentiating and using the smoothness of $\tilde{k}$, $(I-K)^{-1}$ extends to a linear bounded operator also on Sobolev spaces $H^l(\Omega)$ ($l\ge 1$). Indeed, since $\tilde{k}(x,x)=0$, we have
\begin{equation}\label{vxx}v_x(x)=\int_0^x\tilde{k}_x(x,y)(\psi(y)+v(y))dy,\end{equation} which implies
$\|v_x\|_{L^2(\Omega)}\le \|\tilde{k}_x\|_{L^2(\mathcal{T})}\left(\|\psi\|_{L^2(\Omega)}+\|v\|_{L^2(\Omega)}\right).$ Hence, using \eqref{l0}, we have \begin{equation}\label{l1}\|v\|_{H^1(\Omega)}\le C_1\|\psi\|_{L^2(\Omega)},\end{equation} where $C_1$ depends on $\|\tilde{k}_x\|_{L^2(\mathcal{T})}$ and  $C_0$. This shows that $\Phi$ is bounded from $L^2(\Omega)$ into $H^1(\Omega)$, a fortiori bounded from $H^1(\Omega)$ into $H^1(\Omega)$.  Now for $l=2$, using $k_x(x,x)=\frac{\lambda}{3}x$, $(\partial_x^2v)(x)=\frac{\lambda}{3}x(\psi(x)+v(x))+\int_0^x(\partial_x^2\tilde{k})(x,y)(\psi(y)+v(y))dy.$ Taking $L^2(\Omega)$ norms of both sides and using the previous inequalities, we get $\|v\|_{H^2(\Omega)}\le C_2\|\psi\|_{L^2(\Omega)},$ where $C_2$ depends on $\|\partial_x^2\tilde{k}\|_{L^2(\mathcal{T})}$, $C_1$, and $\lambda$.   This shows that $\Phi$ is bounded from $L^2(\Omega)$ into $H^2(\Omega)$, a fortiori bounded from $H^1(\Omega)$ or $H^2(\Omega)$ into $H^2\Omega)$.  Proceeding in the same fashion, one can show that $\|v\|_{H^3(\Omega)}\le C_3\|\psi\|_{H^1(\Omega)},$ where $C_3$ is a fixed constant depending on various norms of $\tilde{k}$.  More generally,
$\|v\|_{H^l(\Omega)}\le C_l\|\psi\|_{H^{l-2}(\Omega)},$ where $l> 2$ and $C_l$ depends on  various norms of $\tilde{k}$.  Hence, for $l> 2$, $\Phi$ is a bounded operator from $H^{l-2}(\Omega)$ into $H^l(\Omega)$, and a fortiori bounded from $H^{l}(\Omega)$ into $H^l(\Omega)$.
\end{proof}
\begin{rem} Another important estimate that follows from \eqref{vxx} via \eqref{l0} is that
\begin{equation}\label{linftyrem}\|v_x\|_{L^\infty(\Omega)}\le C\|\psi\|_{L^2(\Omega)}\end{equation} for some $C>0$ that depends on $\tilde{k}$.
\end{rem}

From the above lemma, it follows in particular that $u(x,t)=[(I-K)^{-1}\tilde{w}](x,t)$, and moreover \begin{equation}\label{ulessw}\|u(t)\|_{L^2(\Omega)}\le \|(I-K)^{-1}\|_{B[L^2(\Omega)]}\cdot \|\tilde{w}(t)\|_{L^2(\Omega)},\end{equation} where $\|\cdot\|_{B[L^2(\Omega)}$ is the operator norm of $(I-K)^{-1}$ from $L^2(\Omega)$ into $L^2(\Omega)$.

Combining \eqref{ulessw} with \eqref{HomKdVBurgers-4} and \eqref{w0u0}, we conclude that

\begin{equation}\label{linshot}\|u(t)\|_{L^2(\Omega)}\le \left(1+\|\tilde{k}\|_{L^2(T)}\right)\|(I-K)^{-1}\|_{B[L^2(\Omega)]}\,\|u_0\|_{L^2(\Omega)}e^{-\alpha t}.\end{equation}

We can prove that the parameter $\alpha$ in the above estimate is positive if $\lambda$ is sufficiently small. Indeed, we have the following lemma.
\begin{lem}\label{alemlambda}For a given $L$, there exists sufficiently small $\lambda$ such that $\alpha=\lambda-\frac{1}{2}\|k_y(\cdot, 0)\|^2_{L^2}>0.$
\end{lem}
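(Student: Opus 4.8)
The plan is to make the dependence of $\|\tilde k_y(\cdot,0)\|_{L^2(\Omega)}^2$ on $\lambda$ explicit and show it is $o(\lambda)$ as $\lambda\to 0^+$, so that the leading term $\lambda$ in $\alpha$ dominates. The key observation is that the pseudo-kernel $\tilde G$ — hence $\tilde k$ — is manufactured in Lemma \ref{lemback} by the successive scheme $\tilde G=\frac{\lambda}{3}st+P\tilde G$, where every one of the operators $P_{-2},P_{-1},P_0,P_1$ carries at least one factor proportional to $\lambda$ in front of the whole construction (the inhomogeneous seed is $\frac{\lambda}{3}st$, and $P_1$ carries an extra $\lambda$). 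Consequently $\tilde G$, and each of its partial derivatives, vanishes at least linearly in $\lambda$ as $\lambda\to 0$. I would make this quantitative: write $\tilde G = \frac{\lambda}{3}\sum_{n\ge 1} H^{n-1}$ (from \eqref{aCauchy}, letting $j\to\infty$, $i=1$), and note from the bound \eqref{Hnest} that $\partial_s^a\partial_t^b H^n$ is bounded by $\frac{(2n+1)^a(n+1)^b 4^n\tilde\lambda^n L^{3n+2-a-b}}{(n+1)!}$ with $\tilde\lambda=\max\{1,\lambda\}$. For $\lambda\le 1$ we have $\tilde\lambda=1$, so this bound is independent of $\lambda$, and the series $\sum_n \partial_s^a\partial_t^b H^n$ converges to a quantity bounded by a constant $M_{a,b}=M_{a,b}(L)$ independent of $\lambda$.

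From this, $\|\partial_s^a\partial_t^b\tilde G\|_\infty \le \frac{\lambda}{3}M_{a,b}(L)$ for all $0<\lambda\le 1$. Since $\tilde k(x,y)=\tilde G(x-y,y)$, the chain rule gives $\tilde k_y(x,0)= -\tilde G_s(x,0)+\tilde G_t(x,0)$, so $\|\tilde k_y(\cdot,0)\|_{L^\infty(\Omega)}\le \frac{\lambda}{3}\bigl(M_{1,0}(L)+M_{0,1}(L)\bigr)=:\lambda\,C(L)$ for $0<\lambda\le 1$. Therefore
\[
\|\tilde k_y(\cdot,0)\|_{L^2(\Omega)}^2 \le L\,\|\tilde k_y(\cdot,0)\|_{L^\infty(\Omega)}^2 \le L\,C(L)^2\,\lambda^2,
\]
and hence
\[
\alpha = \lambda - \tfrac12\|\tilde k_y(\cdot,0)\|_{L^2(\Omega)}^2 \ge \lambda - \tfrac12 L\,C(L)^2\,\lambda^2 = \lambda\Bigl(1-\tfrac12 L\,C(L)^2\,\lambda\Bigr).
\]
This is strictly positive as soon as $0<\lambda<\min\bigl\{1,\ 2/(L\,C(L)^2)\bigr\}$, which proves the lemma.

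The only genuine point requiring care is the bookkeeping that pins down the single overall factor of $\lambda$ — i.e. verifying that the $\lambda$ in the seed $\frac{\lambda}{3}st$ is never cancelled and that the remaining $\lambda$-dependence sits inside the harmless $\tilde\lambda^n$ factors, which are $\equiv 1$ once $\lambda\le 1$. This is already essentially contained in the estimates \eqref{Hnest0}--\eqref{Hnest} established in the proof of Lemma \ref{lemback}; one just has to record that the bounds there do not degrade as $\lambda\downarrow 0$. Passing from the $L^\infty$ bound on $\tilde k_y(\cdot,0)$ to the $L^2$ bound is the trivial inequality $\|f\|_{L^2(\Omega)}\le \sqrt{L}\,\|f\|_{L^\infty(\Omega)}$, and the final step is solving the scalar quadratic inequality in $\lambda$. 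I do not expect any obstacle beyond this routine tracking of constants; in particular no new PDE estimate is needed, only the series representation already in hand.
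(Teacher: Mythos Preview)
Your proposal is correct and follows essentially the same route as the paper: extract the global factor $\lambda/3$ from the series $\tilde G=\frac{\lambda}{3}\sum_{n\ge 0}H^n$, use the $\lambda$-independent bounds \eqref{Hnest} (valid since $\tilde\lambda=1$ for $\lambda\le 1$) to get $\|\tilde G_t\|_\infty\le C(L)\lambda$, and conclude $\|\tilde k_y(\cdot,0)\|_{L^2}^2=O(\lambda^2)$. The only cosmetic difference is that the paper notes $\tilde G_s(\cdot,0)\equiv 0$ (differentiate the boundary condition $\tilde G(s,0)=0$), so $\tilde k_y(x,0)=\tilde G_t(x,0)$ exactly and your $M_{1,0}$ term is unnecessary---but this does not affect the argument.
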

\begin{proof}
Taking the partial derivative of both sides of \eqref{aCauchy} with respect to $t$ and taking $i=0$ we see that
$\tilde{G}^j_t(s,t)=\frac{\lambda}{3}\sum_{n=0}^{j-1}H^n_t(s,t).$ Passing to the limit we obtain $\tilde{G}_t(s,t)=\frac{\lambda}{3}\sum_{n=0}^{\infty}H^n_t(s,t).$ Note that for $\lambda<1$, $\tilde{\lambda}=1$. Therefore by \eqref{Hnest0} the summation term is absolutely less than some constant $M$ that only depends on $L$. Hence we get $\|\tilde{G}_t\|_{\infty}\leq\frac{\lambda M}{3}.$ Since $k_y(x,0)=\tilde{G}_t(s,0)$, in particular we have $\|k_y(\cdot, 0)\|^2_{L^2}\leq L \|k_y(\cdot, 0)\|^2_{\infty}\leq L \|\tilde{G}_t\|^2_{\infty}\leq \frac{\lambda^2 M^2 L}{9}.$ As a result, $\alpha=\lambda-\frac{1}{2}\|k_y(\cdot, 0)\|^2_{L^2}\geq \lambda-\frac{\lambda^2 M^2 L}{18}=\lambda^2(\frac{1}{\lambda}-\frac{ M^2 L}{18})$ which is positive for sufficiently small $\lambda$.
\end{proof}

The inequality \eqref{linshot} together with Lemma \ref{alemlambda} proves the linear part of Theorem \ref{Linthm}.

\subsection{Nonlinear model}\label{ProofofThm2}
In this section, we consider the nonlinear KdV model \eqref{nonlinearKdV} with the feedback controllers given in \eqref{controllers}.  By using the transformation given in \eqref{mod-transform}, we obtain the following PDE from \eqref{KdVBurgers}, noting that $\tilde{k}(x,x)=0$:
\begin{equation}\label{ch414}
\tilde{w}_{t} + \tilde{w}_{x} + \tilde{w}_{xxx} + \lambda \tilde{w}
= \tilde{k}_y(\cdot,0)\tilde{w}_x(0,\cdot)-(I-K)[\left(\tilde{w}+ v\right)\left(\tilde{w}_{x} + v_x\right)]
\end{equation}
with homogeneous boundary conditions
\begin{equation}
  \tilde{w}(0,t) = 0 \; , \; \tilde{w}(L,t) = 0, \quad \textrm{and} \quad \tilde{w}_{x}(L,t) = 0,
\end{equation} where $v(x,t)=[\Phi\tilde{w}](x,t)$, with $\Phi$ being the linear operator defined in Section \ref{ProofofThm} in the proof of Lemma \ref{inverselem}.
Multiplying \eqref{ch414} by $\tilde{w}(x,t)$ and integrating over $\Omega=(0,L)$, we obtain
\begin{multline}\label{ch416}
\int_{0}^{L}\tilde{w}(x,t)\tilde{w}_{t}(x,t)dx = \int_0^L\tilde{k}_y(x,0)\tilde{w}_x(0,t)\tilde{w}(x,t)dx-\int_{0}^{L}\tilde{w}(x,t)\tilde{w}_{x}(x,t)dx \\
- \int_{0}^{L}\tilde{w}(x,t)\tilde{w}_{xxx}(x,t)dx
  -\lambda\int_{0}^{L}\tilde{w}^{2}(x,t)dx - \int_{0}^{L}\tilde{w}^2(x,t)\tilde{w}_x(x,t)dx - \int_{0}^{L}\tilde{w}^2(x,t)v_x(x,t)dx \\
  -\int_{0}^{L}\tilde{w}(x,t)\tilde{w}_x(x,t)v(x,t)dx-\int_{0}^{L}\tilde{w}(x,t)v(x,t)v_x(x,t)dx\\
  +\int_0^L\left(\int_0^x\tilde{k}(x,y)\tilde{w}(y,t)\tilde{w}_y(y,t)dy\right)\tilde{w}(x,t)dx
  +\int_0^L\left(\int_0^x\tilde{k}(x,y)\tilde{w}(y,t)\tilde{v}_y(y,t)dy\right)\tilde{w}(x,t)dx\\
  +\int_0^L\left(\int_0^x\tilde{k}(x,y)\tilde{v}(y,t)\tilde{w}_y(y,t)dy\right)\tilde{w}(x,t)dx
  +\int_0^L\left(\int_0^x\tilde{k}(x,y)\tilde{v}(y,t)\tilde{v}_y(y,t)dy\right)\tilde{w}(x,t)dx.
\end{multline}

We estimate the last four terms at the right hand side of \eqref{ch416} as follows:
\begin{multline}\label{four-1}
\int_0^L\left(\int_0^x\tilde{k}(x,y)\tilde{w}(y,t)\tilde{w}_y(y,t)dy\right)\tilde{w}(x,t)dx = \frac{1}{2}\int_0^L\left(\int_0^x\tilde{k}(x,y)\frac{\partial}{\partial y}\tilde{w}^2(y,t)dy\right)\tilde{w}(x,t)dx\\
=\frac{1}{2}\int_0^L\left.\tilde{k}(x,y)\tilde{w}^2(y,t)\right|_{0}^x\tilde{w}(x,t)dx- \frac{1}{2}\int_0^L\left(\int_0^x\tilde{k}_y(x,y)\tilde{w}^2(y,t)dy\right)\tilde{w}(x,t)dx\\
\le \frac{\sqrt{L}}{2}\|\tilde{k}_y\|_{L^\infty(T_0)}\|\tilde{w}(t)\|_{L^2(\Omega)}^3,
\end{multline}

\begin{equation}
\int_0^L\left(\int_0^x\tilde{k}(x,y)\tilde{w}(y,t){v}_y(y,t)dy\right)\tilde{w}(x,t)dx \le \|\tilde{k}\|_{L^2(T_0)}\|v_x(t)\|_{L^\infty(\Omega)}\|\tilde{w}(t)\|_{L^2(\Omega)}^2,
\end{equation}

\begin{multline}
\int_0^L\left(\int_0^x\tilde{k}(x,y){v}(y,t)\tilde{w}_y(y,t)dy\right)\tilde{w}(x,t)dx \\
=\int_0^L\left.\tilde{k}(x,y){v}(y,t)\tilde{w}(y,t)\right|_{0}^x\tilde{w}(x,t)dx- \int_0^L\left(\int_0^x\tilde{k}_y(x,y){v}(y,t)\tilde{w}(y,t)dy\right)\tilde{w}(x,t)dx\\
- \int_0^L\left(\int_0^x\tilde{k}(x,y){v}_y(y,t)\tilde{w}(y,t)dy\right)\tilde{w}(x,t)dx\le \sqrt{L}\|\tilde{k}_y\|_{L^\infty(T_0)}\|v(t)\|_{L^2(\Omega)}\|\tilde{w}(t)\|_{L^2(\Omega)}^2\\
+\|\tilde{k}\|_{L^2(T_0)}\|v_x(t)\|_{L^\infty(\Omega)}\|\tilde{w}(t)\|_{L^2(\Omega)}^2,
\end{multline}

\begin{multline}\label{four-4}
\int_0^L\left(\int_0^x\tilde{k}(x,y)v(y,t)v_y(y,t)dy\right)\tilde{w}(x,t)dx = \frac{1}{2}\int_0^L\left(\int_0^x\tilde{k}(x,y)\frac{\partial}{\partial y}v^2(y,t)dy\right)\tilde{w}(x,t)dx\\
=\frac{1}{2}\int_0^L\left.\tilde{k}(x,y)v^2(y,t)\right|_{0}^x\tilde{w}(x,t)dx- \frac{1}{2}\int_0^L\left(\int_0^x\tilde{k}_y(x,y)v^2(y,t)dy\right)\tilde{w}(x,t)dx\\
\le \frac{\sqrt{L}}{2}\|\tilde{k}_y\|_{L^\infty(T_0)}\|v(t)\|_{L^2(\Omega)}^2\|\tilde{w}(t)\|_{L^2(\Omega)}.
\end{multline}

Now, estimating the other terms using integration by parts and the Cauchy-Schwarz inequality, and combining these with \eqref{four-1}-\eqref{four-4}, it follows that
\begin{multline}\label{ch417}
\frac{1}{2}\frac{d}{dt}\|\tilde{w}(t)\|_{L^2(\Omega)}^2 + \left(\lambda-\frac{1}{2}\|\tilde{k}_y(\cdot,0)\|_{L^2(\Omega)}^2\right)\|\tilde{w}(t)\|_{L^2(\Omega)}^2 \\
\le \left(\frac{3}{2}+2\|\tilde{k}\|_{L^2(T_0)}\right)\|\tilde{w}(t)\|_{L^2(\Omega)}^2\|v_x(t)\|_{L^\infty(\Omega)}+\|\tilde{w}(t)\|_{L^2(\Omega)}\|v(t)\|_{L^2(\Omega)}\|v_x(t)\|_{L^\infty(\Omega)}\\
+\frac{\sqrt{L}}{2}\|\tilde{k}_y\|_{L^\infty(T_0)}\|\tilde{w}(t)\|_{L^2(\Omega)}^3
+\sqrt{L}\|\tilde{k}_y\|_{L^\infty(T_0)}\|v(t)\|_{L^2(\Omega)}\|\tilde{w}(t)\|_{L^2(\Omega)}^2\\
+\frac{\sqrt{L}}{2}\|\tilde{k}_y\|_{L^\infty(T_0)}\|v(t)\|_{L^2(\Omega)}^2\|\tilde{w}(t)\|_{L^2(\Omega)}.
\end{multline}

Using \eqref{l0} and \eqref{linftyrem}, we deduce the following inequality:
\begin{equation}\label{bernoulli}
  y'+2\alpha y-cy^\frac{3}{2}\le 0,
\end{equation} where $y(t)\equiv \|\tilde{w}(t)\|_{L^2(\Omega)}^2$, and $c$ is a constant that depends on $L$ and various norms of $\tilde{k}$.  Solving the inequality \eqref{bernoulli} and assuming $\displaystyle \|\tilde{w}_0\|_{L^2(\Omega)}<\frac{\alpha}{c}$, we get
\begin{equation}\label{Nonlinw}
  \|\tilde{w}(t)\|_{L^2(\Omega)}^2=y(t)\le \frac{1}{\left[\left(\frac{1}{\|\tilde{w}_0\|_{L^2(\Omega)}}-\frac{c}{2\alpha}\right)e^{\alpha t}+\frac{c}{2\alpha}\right]^2}<\frac{1}{\left[\frac{e^{\alpha t}}{2\|\tilde{w}_0\|_{L^2(\Omega)}}\right]^2}.
\end{equation}  Recall that $\|\tilde{w}_0\|_{L^2(\Omega)}\lesssim\|u_0\|_{L^2(\Omega)}$.   Combining this with \eqref{ulessw} and \eqref{Nonlinw}, we deduce
\begin{equation}
  \|u(t)\|_{L^2(\Omega)}\lesssim \|u_0\|_{L^2(\Omega)}e^{-\alpha t}, \text{ for } t\ge 0.
\end{equation}  Hence, the proof of Theorem \ref{Linthm} for the nonlinear KdV equation is also complete.  Note that the smallness assumption on the initial datum $\tilde{w}_0$ implies a smallness assumption on $u_0$ due to the fact that we also have $\|u_0\|_{L^2(\Omega)}\lesssim\|\tilde{w}_0\|_{L^2(\Omega)}$ thanks to Lemma \ref{inverselem}.
\section{Well-posedness}In this section, we prove the well-posedness of the PDE models studied in the previous sections.  For simplicity, we assume $L=1$ throughout this section. This assumption has no consequence as far as wellposedness is concerned, and all results proved here are also true for any $L>0$.  Thanks to Lemma \ref{lemback}, it is enough to prove the well-posedness of the respective modified target systems in order to obtain well-posedness of \eqref{KdVBurgers} and \eqref{nonlinearKdV}.
\subsection{Linearised model}
Consider the following linear KdV equation with homogeneous boundary conditions.
\begin{equation}\label{KdV-wp}
 	\begin{cases}
 	\displaystyle y_{t} +y_{x} + y_{xxx} + {\lambda} y = a(x)y_x(0,\cdot) & \text { in } \Omega\times \mathbb{R_+},\\
    {y}(0,t) = {y}(1,t) = {y}_{x}(1,t) = 0 & \text { in } \mathbb{R_+},\\
    {y}(x,0)=y_0\in L^2(\Omega) & \text { in } \Omega.
 	\end{cases}
\end{equation} We have the following result.

\begin{prop}\label{wellposednessprop1}
\begin{enumerate}
  \item[i)] Let $T'>0$ be arbitrary and $y_0,a\in L^2(\Omega)$.  Then, there exists $T\in (0,T')$ independent of the size of $y_0$ such that \eqref{KdV-wp} has a unique local solution $y\in C([0,T];L^2(\Omega))\cap L^2(0,T;H^1(\Omega))$ satisfying also $y_x\in C([0,1];L^2(0,T)).$ Moreover, if $a\in L^\infty(\Omega)$, then $y$ extends as a global solution. In other words, $T$ can be taken as $T'$.
  \item[ii)] Let $a\in H^1(\Omega)$ and let $y_0\in H^3(\Omega)$ satisfy the compatibility conditions $y_0(0)=y_0(1)=y_0'(1)=0.$  Then, the (local/global) solution in part (i) enjoys the extra regularity $y\in C([0,T];H^3(\Omega))\cap L^2(0,T;H^4(\Omega)).$
\end{enumerate}
\end{prop}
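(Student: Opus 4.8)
The plan is to treat \eqref{KdV-wp} as a perturbation of the classical homogeneous KdV problem with the boundary conditions $y(0,t)=y(1,t)=y_x(1,t)=0$, for which the linear semigroup theory and the Kato smoothing estimates are standard. First I would recall (or establish via the Hille--Yosida theorem and the classical energy estimates of Rosier and of Perla Menzala--Vasconcellos--Zuazua) that the operator $A y = -y_x - y_{xxx}$ with domain $D(A)=\{y\in H^3(\Omega): y(0)=y(1)=y'(1)=0\}$ generates a $C_0$-semigroup $\{S(t)\}_{t\ge0}$ on $L^2(\Omega)$, and that for the inhomogeneous problem $y_t = Ay - \lambda y + f$ one has the a priori bounds
\begin{equation}\label{smoothing}
\|y\|_{C([0,T];L^2(\Omega))} + \|y\|_{L^2(0,T;H^1(\Omega))} + \|y_x(0,\cdot)\|_{L^2(0,T)} \lesssim \|y_0\|_{L^2(\Omega)} + \|f\|_{L^1(0,T;L^2(\Omega))},
\end{equation}
together with the fact that $x\mapsto y_x(x,\cdot)$ is continuous from $[0,1]$ into $L^2(0,T)$ (the hidden/trace regularity). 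These follow by multiplying the equation by $y$, by $xy$, and by $(T-t)y$ in the usual way; the term $\lambda y$ is harmless since it can be absorbed by Gr\"onwall. I would define the solution space
\[
X_T := \{ y\in C([0,T];L^2(\Omega))\cap L^2(0,T;H^1(\Omega)) : y_x\in C([0,1];L^2(0,T))\}
\]
with the natural norm, the right-hand side of \eqref{smoothing} being the relevant quantity.

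For part (i), the key observation is that the source term $a(x)\,y_x(0,t)$ is a product of a fixed $L^2(\Omega)$ function with the \emph{scalar-in-}$x$ trace $y_x(0,\cdot)$. I would set up the fixed-point map $\Gamma: z\mapsto y$, where $y$ solves \eqref{KdV-wp} with the right-hand side $a(x)\,z_x(0,t)$ (a genuinely given forcing once $z$ is fixed), using Duhamel's formula $y(t)=S(t)y_0 + \int_0^t S(t-s)(-\lambda y + a\,z_x(0,s))\,ds$. Applying \eqref{smoothing} with $f = a\,z_x(0,\cdot)$, one gets $\|f\|_{L^1(0,T;L^2(\Omega))}\le \|a\|_{L^2(\Omega)}\,\|z_x(0,\cdot)\|_{L^1(0,T)}\le \sqrt{T}\,\|a\|_{L^2(\Omega)}\,\|z_x(0,\cdot)\|_{L^2(0,T)}$, so the map $\Gamma$ is a contraction on a ball of $X_T$ provided $\sqrt{T}\,\|a\|_{L^2(\Omega)}$ is small enough; crucially this smallness threshold depends only on $\|a\|_{L^2(\Omega)}$ and not on $\|y_0\|_{L^2(\Omega)}$, which gives the stated uniformity of $T$. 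For the global statement when $a\in L^\infty(\Omega)$, I would instead go back to the energy identity for the full equation \eqref{KdV-wp} directly: multiplying by $y$ and integrating gives $\frac12\frac{d}{dt}\|y\|_{L^2}^2 + \frac12|y_x(0,t)|^2 + \lambda\|y\|_{L^2}^2 = \int_0^1 a(x)\,y_x(0,t)\,y(x,t)\,dx \le \frac12|y_x(0,t)|^2 + \frac12\|a\|_{L^2(\Omega)}^2\|y\|_{L^2}^2$, so the destabilizing trace term is absorbed and $\|y(t)\|_{L^2}^2 \le \|y_0\|_{L^2}^2 e^{\|a\|_{L^2(\Omega)}^2 t}$; combined with \eqref{smoothing} on successive short intervals this prevents blow-up and extends the solution to all of $[0,T']$. (One in fact only needs $a\in L^2(\Omega)$ for this a priori bound, but the local-existence threshold is what forces the iteration, and patching it with the a priori bound is cleanest.) Uniqueness follows by applying the same contraction estimate to the difference of two solutions.

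For part (ii), I would bootstrap regularity by differentiating the equation in $t$. Set $z := y_t$; formally $z$ solves the same equation, $z_t + z_x + z_{xxx} + \lambda z = a(x)\,z_x(0,t)$, with boundary conditions $z(0,t)=z(1,t)=z_x(1,t)=0$ and initial datum $z(0) = y_t(0) = Ay_0 - \lambda y_0 + a\,y_0'(0)$, which lies in $L^2(\Omega)$ precisely because $y_0\in H^3(\Omega)$ satisfies the compatibility conditions $y_0(0)=y_0(1)=y_0'(1)=0$ (so $Ay_0\in L^2$) and $a\in H^1(\Omega)\hookrightarrow L^\infty$ makes $a\,y_0'(0)$ meaningful. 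Applying part (i) to $z$ gives $y_t\in C([0,T];L^2(\Omega))\cap L^2(0,T;H^1(\Omega))$. Then, reading the equation at fixed $t$ as the ODE $y_{xxx} = -y_x - \lambda y + a(x)\,y_x(0,t) - y_t$ in $x$, the right-hand side lies in $L^2(\Omega)$ uniformly in $t\in[0,T]$ (using $a\in L^2(\Omega)$ for the source term and the just-obtained control on $y_t$), hence $y\in C([0,T];H^3(\Omega))$; one more $x$-derivative of the equation together with $y_t\in L^2(0,T;H^1(\Omega))$ and $a\in H^1(\Omega)$ gives $y_{xxxx}\in L^2(0,T;L^2(\Omega))$, i.e. $y\in L^2(0,T;H^4(\Omega))$. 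All of this is carried out rigorously by first doing a Galerkin or difference-quotient approximation in $t$ to justify that $z=y_t$ is a legitimate solution with the claimed initial datum, and then passing to the limit; this approximation/justification step is the one place where care is genuinely needed.

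The main obstacle. The conceptual content is entirely in the trace estimate packaged in \eqref{smoothing}: one must know that $y_x(0,\cdot)\in L^2(0,T)$ with a bound controlled by the data, and that $x\mapsto y_x(x,\cdot)$ is continuous into $L^2(0,T)$ (the ``$y_x\in C([0,1];L^2(0,T))$'' assertion). This hidden regularity is what makes the forcing term $a(x)y_x(0,t)$ a bounded perturbation in the right sense, and it is what must be proved carefully for the homogeneous problem first; given the literature (Rosier, Bona--Sun--Zhang, Cerpa's survey) I would cite or reproduce the multiplier argument rather than treat it as routine. The second, more technical, obstacle is the justification in part (ii) that $y_t$ solves the differentiated problem with the stated $L^2$ initial value, which is exactly where the compatibility conditions on $y_0$ enter and where an approximation argument cannot be skipped.
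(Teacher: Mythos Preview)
Your approach matches the paper's: the same operator $A$ and semigroup, the same solution space (the paper calls it $Y_T$), the same $\sqrt{T}\,\|a\|_{L^2(\Omega)}$ contraction factor with the hidden regularity cited from Bona--Sun--Zhang, and the same energy/Gronwall argument for global continuation. The one difference worth noting concerns part~(ii), precisely at the step you flag as the technical obstacle. Rather than differentiating $y$ in $t$ and then justifying via approximation that $z=y_t$ solves the differentiated problem, the paper reverses the order: it first \emph{solves} the equation for an unknown $q$ with initial datum $q_0=-y_0'-y_0'''-\lambda y_0+a\,y_0'(0)\in L^2(\Omega)$ using part~(i), then \emph{defines} $y(x,t):=y_0(x)+\int_0^t q(x,s)\,ds$ and verifies directly that this $y$ solves \eqref{KdV-wp}. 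Since $y_t=q$ by construction, no difference-quotient or Galerkin argument is needed; the $H^3$ and $H^4$ bounds then follow exactly as you outline, reading the equation as an ODE in $x$ (the paper also invokes Gagliardo--Nirenberg to close the $H^3$ estimate). Your side remark that $a\in L^2(\Omega)$ already suffices for the global a~priori $L^2$ bound is correct.
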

\begin{proof}

\textbf{Step 1 - Local wellposedness:} Let us define the linear operator $A:D(A)\subset L^2(\Omega)\rightarrow L^2(\Omega)$ by $A\varphi := -\varphi'-\varphi''',$ where $D(A):=\{\varphi\in H^3(\Omega):\varphi(0)=\varphi(1)=\varphi'(1)=0\}.$  Then, the initial boundary value problem \eqref{KdV-wp} can be rewritten in the abstract operator theoretic form
\begin{equation}\label{KdV-wp-abs}
 	\begin{cases}
 	\displaystyle \dot{y} = Ay+Fy,\\
    {y}(0)=y_0,
 	\end{cases}
\end{equation} where $F\varphi:=-\lambda \varphi + a(\cdot)\gamma_1^0\varphi$.  Here, $\gamma_1^0$ is the first order trace operator at the left endpoint, i.e., $\gamma_1^0\varphi:=\varphi'(0)$.  This operator is well-defined for $\varphi\in H^{\frac{3}{2}+\epsilon}(\Omega)\supset D(A)$.

It is not difficult to see that the adjoint of $A$ is defined by $A^*\varphi:= \varphi'+\varphi'''$ with $D(A^*):=\{\varphi\in H^3(\Omega):\varphi(0)=\varphi(1)=\varphi'(0)=0\}.$

$A$ is a densely defined closed operator, and moreover, $A$ and $A^*$ are dissipative \cite[Proposition 3.1]{Rosier1997}.  Therefore, $A$ generates a strongly continuous semigroup of contractions  $\displaystyle\{S(t)\}_{t\ge 0}$ on $L^2(\Omega)$ \cite[Corollary I.4.4]{Pazy}. Now we construct the operator \begin{equation}\label{soloperator}y=[\Psi z](t):= S(t)y_0+\int_0^tS(t-s)Fz(s)ds.\end{equation}

Let us define the space (see e.g., \cite{BSZ03}) \begin{equation}\label{ourspace}Y_T:=\{z\in C([0,T];L^2(\Omega))\cap L^2(0,T;H^1(\Omega))\,|\,z_x\in C([0,1];L^2(0,T))\}\end{equation} equipped with the norm
$\|z\|_{Y_T}:=\left(\|z\|_{C([0,T];L^2(\Omega))}^2+\|z\|_{L^2(0,T;H^1(\Omega))}^2+\|z_x\|_{C([0,1];L^2(0,T))}^2\right)^{\frac{1}{2}}.$  Then, for $z\in Y_T$\,, by using the semigroup estimates \cite[Prop 2.1, Prop 2.4, Prop 2.16, Prop 2.17]{BSZ03}, we have
\begin{multline}\label{estimate01}\|y\|_{Y_T}=\|\Psi z\|_{Y_T}\le \|S(t)y_0\|_{Y_T}+\left\|\int_0^tS(t-s)Fz(s)ds\right\|_{Y_T}\\
\le c_0(1+T)^\frac{1}{2}\|y_0\|_{L^2(\Omega)}+c_1(1+T)^\frac{1}{2}\left\|-\lambda z+az_x(0,\cdot)\right\|_{L^1(0,T;L^2(\Omega))}\\
\le c_0(1+T)^\frac{1}{2}\|y_0\|_{L^2(\Omega)}+c_1(1+T)^\frac{1}{2}\sqrt{T}(1+\|a\|_{L^2(\Omega)})\|z\|_{Y_T}\,,\end{multline} where $c_0$ and $c_1$ are positive constants which do not depend on the varying parameters.  It follows that $\Psi$ maps $Y_T$ into itself.  Now, let $z_1,z_2\in Y_T$ and $y_1=\Psi z_1$, $y_2=\Psi z_2$.  By using similar arguments, we have
\begin{equation*}
  \|y_1-y_2\|_{Y_T}=\|\Psi z_1-\Psi z_2\|_{Y_T}\le c_1(1+T)^\frac{1}{2}\sqrt{T}(1+\|a\|_{L^2(\Omega)})\|z_1-z_2\|_{Y_T}.
\end{equation*}  Let $T\in (0,T')$ be such that $0< (1+T)^\frac{1}{2}\sqrt{T}< \left(\frac{1}{c_1\left(1+\|a\|_{L^2(\Omega)}\right)}\right).$ Then, $\Psi$ is a contraction on $Y_T$, and this gives us a unique local solution $y\in Y_T$.  Here, the size of $T$ is independent of the size of the initial datum. This contrasts with the corresponding nonlinear model in which the size of $T$ is related to the size of the initial datum.

\textbf{Step 2 - Global wellposedness: } Let $T_{\max}\le T'$ be the maximal time of existence for the local solution found in Step 1 in the sense that $y\in Y_T$ for all $T<T_{\max}$.   In order to prove that $y$ is global, and deduce that $T$ can be taken as $T'$, it is enough to show that $\displaystyle \lim_{T\rightarrow T_{\max}^-}\|y\|_{Y_T}<\infty.$  This will be proved via multipliers, which will be done only formally, but the calculations can always be justified by a density argument which relies on the regularity result in part (ii) of this proposition. To this end, we multiply  \eqref{KdV-wp} by $y$ and integrate over $\Omega$ to obtain
\begin{equation}\label{Iden01}
  \frac{1}{2}\frac{d}{dt}\|y(t)\|_{L^2(\Omega)}^2+\frac{1}{2}|y_x(0,t)|^2 + \lambda \|y(t)\|_{L^2(\Omega)}^2 = \int_0^1 a(x)y_x(0,t)y(x,t)dx.
\end{equation} Using $\epsilon$-Young's inequality with $\displaystyle\epsilon=\frac{1}{4}$, we have
\begin{equation}\label{Iden02}
  \frac{1}{2}\frac{d}{dt}\|y(t)\|_{L^2(\Omega)}^2+\frac{1}{4}|y_x(0,t)|^2 + \lambda \|y(t)\|_{L^2(\Omega)}^2 \le  \|a(x)\|_{L^\infty(\Omega)}^2\|y(t)\|_{L^2(\Omega)}^2.
\end{equation}  Integrating the above inequality over $(0,t)$, we get
\begin{equation}\label{Iden03}
 \|y(t)\|_{L^2(\Omega)}^2+\int_0^t|y_x(0,t)|^2dt + \le  2\|y_0\|_{L^2(\Omega)}^2+4(\|a(x)\|_{L^\infty(\Omega)}^2-\lambda)\int_0^t\|y(s)\|_{L^2(\Omega)}^2ds.
\end{equation} Let $E_0(t):=\|y(t)\|_{L^2(\Omega)}^2+\int_0^t|y_x(0,t)|^2dt.$ Then, from \eqref{Iden03}, we get
$$E_0(t)\le 2\|y_0\|_{L^2(\Omega)}^2+4\left|\|a(x)\|_{L^\infty}^2-\lambda\right|\int_0^tE_0(s)ds.$$ Now, thanks to the Gronwall's lemma, we have
\begin{equation}\label{Iden04}
 E_0(t)=\|y(t)\|_{L^2(\Omega)}^2+\int_0^t|y_x(0,t)|^2dt \le  2\|y_0\|_{L^2(\Omega)}^2e^{4\left|\|a(x)\|_{L^\infty}^2-\lambda\right|t},\, t\in [0,T_{\max}).
\end{equation} We in particular deduce that
\begin{equation}\label{Firstimpest}
                                            \lim_{T\rightarrow T_{\max}^-}\|y\|_{C([0,T];L^2(\Omega))}\le\sqrt{2}\|y_0\|_{L^2(\Omega)}e^{2\left|\|a(x)\|_{L^\infty}^2-\lambda\right|T_{max}}<\infty.
                                            \end{equation}
By using \eqref{Iden04}, we also deduce that
\begin{equation}\label{Iden05}
   \lim_{T\rightarrow T_{\max}^-}\|y\|_{L^2(0,T;L^2(\Omega))}\le \sqrt{2T_{max}}\|y_0\|_{L^2(\Omega)}e^{2\left|\|a(x)\|_{L^\infty}^2-\lambda\right|T_{max}}.
\end{equation}
Secondly, we multiply \eqref{KdV-wp} by $xy$ and integrate over $\Omega\times (0,t)$ and get
\begin{multline}\label{Iden06}
  \int_{0}^1xy^2(x,s)dx+3\int_0^t\int_0^1y_x^2(x,s)dxds+\lambda\int_0^t\int_0^1xy^2(x,s)dxds\\
   = \int_0^1xy_0^2(x)dx+\int_0^t\int_0^1y^2(x,s)dxds+ \int_0^t\int_0^1xay_x(0,s)y(x,s)dxds.
\end{multline} From the above identity, it follows that

\begin{equation}\label{Iden07}
   \|y_x\|_{L^2(0,t;L^2(\Omega))}^2\le \frac{1}{3}\|y_0\|_{L^2(\Omega)}^2+\left(\frac{1}{2}+\frac{\|a\|_{L^\infty(\Omega)}^2}{18}\right)\int_0^tE_0(s)ds.
\end{equation} Combining the above inequality with \eqref{Firstimpest}, we deduce that

\begin{multline}\label{Iden08}
   \lim_{T\rightarrow T_{\max}^-}\|y_x\|_{L^2(0,T;L^2(\Omega))}\\
   \le \frac{1}{\sqrt{3}}\|y_0\|_{L^2(\Omega)}+\left(\frac{1}{\sqrt{2}}+\frac{\|a\|_{L^\infty(\Omega)}}{3\sqrt{2}}\right)\sqrt{2T_{\max}}\left(\|y_0\|_{L^2(\Omega)}e^{2\left|\|a(x)\|_{L^\infty}^2-\lambda\right|T_{max}}\right).
\end{multline} Using \eqref{Iden05} and \eqref{Iden08}, we deduce that
\begin{multline}\label{Iden09}
   \lim_{T\rightarrow T_{\max}^-}\|y\|_{L^2(0,T;H^1(\Omega))}\\
   \le \frac{1}{\sqrt{3}}\|y_0\|_{L^2(\Omega)}+\left(1+\frac{1}{\sqrt{2}}+\frac{\|a\|_{L^\infty(\Omega)}}{3\sqrt{2}}\right)\sqrt{2T_{\max}}\left(\|y_0\|_{L^2(\Omega)}e^{2\left|\|a(x)\|_{L^\infty}^2-\lambda\right|T_{max}}\right)<\infty.
\end{multline}

Since $y$ is the fixed point in \eqref{soloperator}, we have \begin{equation}y=S(t)y_0+\int_0^tS(t-s)Fy(s)ds.\end{equation}
Using \cite[Prop 2.16 and Prop 2.17]{BSZ03}, we know that the semigroup enjoys the properties
\begin{equation}\label{semi-prop1}
  \sup_{x\in \Omega}\left\|\partial_x [S(t)y_0](x)\right\|_{L^2(0,T)}\le c_2\|y_0\|_{L^2(\Omega)}
\end{equation} and
\begin{equation}\label{semi-prop2}
  \sup_{x\in \Omega}\left\|\partial_x \left[\int_0^tS(t-s)Fy(s)ds\right](x)\right\|_{L^2(0,T)}\le c_3\int_0^T\left\|[Fy](\cdot,t)\right\|_{L^2(\Omega)}dt
\end{equation} for some $c_2,c_3>0.$ From the definition of $Fy$ we have
$$\|[Fy](\cdot,t)\|_{L^2(\Omega)}\le \lambda\|y(\cdot,t)\|_{L^2(\Omega)}+\|a\|_{L^2(\Omega)}|y_x(0,t)|.$$ Therefore, by \eqref{Iden04} and the Cauchy-Schwarz inequality, we have the estimate
\begin{multline}\label{semi-prop3}\int_0^T\left\|[Fy](\cdot,t)\right\|_{L^2(\Omega)}dt\le \lambda\int_0^T\sqrt{E_0(t)}dt+\|a\|_{L^2(\Omega)}\sqrt{T}\sqrt{E_0(T)}\\
\le \left(\lambda T_{\max}+\|a\|_{L^2(\Omega)}\sqrt{T_{\max}}\right)\sqrt{2}\|y_0\|_{L^2(\Omega)}e^{2\left|\|a(x)\|_{L^\infty}^2-\lambda\right|T_{max}}.\end{multline}
Now, it follows from \eqref{semi-prop1}-\eqref{semi-prop3} that
\begin{multline*}
   \lim_{T\rightarrow T_{\max}^-}\|y_x\|_{C([0,1];L^2(0,T))}\\
   \le c_2\|y_0\|_{L^2(\Omega)}+c_3\left(\lambda T_{\max}+\|a\|_{L^2(\Omega)}\sqrt{T_{\max}}\right)\sqrt{2}\|y_0\|_{L^2(\Omega)}e^{2\left|\|a(x)\|_{L^\infty}^2-\lambda\right|T_{max}}<\infty.
\end{multline*}

\textbf{Step 3 - Regularity:}
Regarding the regular solutions, assume that $y_0\in D(A)$ and consider the following problem:
\begin{equation}\label{KdV-wp-regular}
 	\begin{cases}
 	\displaystyle q_{t} +q_{x} + q_{xxx} + {\lambda} q = a(\cdot)q_x(0,\cdot) & \text { in } \Omega\times (0,T),\\
    {q}(0,t) = {q}(1,t) = {q}_{x}(1,t) = 0 & \text { in } (0,T),\\
    {q}(x,0)=q_0\equiv -y_0'(x)-y_0'''(x)-\lambda y_0(x)+y_0'(0)a(x) & \text { in } \Omega.
 	\end{cases}
\end{equation} Note that $q_0\in L^2(\Omega)$, and we can solve \eqref{KdV-wp-regular} in $Y_T$ as before.  Now, we set $y(x,t):=y_0(x)+\int_0^tq(x,s)ds.$ Then,
\begin{multline}\label{regular01}
  y_t(x,t)+y_x(x,t)+y_{xxx}(x,t)+\lambda y(x,t)-a(x)y_x(0,t)
  =q(x,t)+y_0'(x)+y_0'''(x)+\lambda y_0-y_0'(0)a(x)\\
  +\int_0^t\left(q_x(x,s)+q_{xxx}(x,s)+\lambda q(x,t)-a(x)q_x(0,s)\right)ds=0,
\end{multline} and moreover $y(x,0)=y_0$ and $y(0,t)=y(1,t)=y_x(1,t)=0.$  Therefore, $y$ solves \eqref{KdV-wp}. Writing
$$y_{xxx}(x,t)=-q(x,t)-y_x(x,t)-\lambda y(x,t)+a(x)y_x(0,t)$$ and taking $L^2(\Omega)$ norms of both sides we get
$$\|\partial_x^3 y(t)\|_{L^2(\Omega)}\le \|q(t)\|_{L^2(\Omega)}+\|\partial_x y(t)\|_{L^2(\Omega)}+\lambda\|y(t)\|_{L^2(\Omega)}+|y_x(0,t)|\|a\|_{L^2(\Omega)}.$$ Recall that we have the Gargliardo-Nirenberg inequalities $$\|\partial_x y(t)\|_{L^2(\Omega)}\lesssim \|y\|_{L^2(\Omega)}^\frac{2}{3}\|\partial_x^3y\|_{L^2(\Omega)}^\frac{1}{3}\text{ and }\|\partial_x^2 y(t)\|_{L^2(\Omega)}\lesssim \|y\|_{L^2(\Omega)}^\frac{1}{3}\|\partial_x^3y\|_{L^2(\Omega)}^\frac{2}{3},$$ and the trace inequality (remember that $y_x(1,t)=0$): $|y_x(0,t)|\le \|\partial_x^2 y\|_{L^2(\Omega)}.$

Using these estimates, we get $\|\partial_x^3 y(t)\|_{L^2(\Omega)}\lesssim \|q(t)\|_{L^2(\Omega)}+\|y(t)\|_{L^2(\Omega)}.$ By taking the sup norm with respect to the temporal variable, we deduce that $y\in C([0,T];H^3(\Omega)).$

Similarly, writing out $\partial_x^4y(x,t)=-q_x(x,t)-y_{xx}(x,t)-\lambda y_x(x,t)+a'(x)y_x(0,t),$ using the Gagliardo-Nirenberg and trace inequality, we get $\|\partial_x^4 y(t)\|_{L^2(\Omega)}\lesssim \|q_x(t)\|_{L^2(\Omega)}+\|y_x(t)\|_{L^2(\Omega)}.$ Taking $L^2(0,T)$ norms of both sides we deduce that $y\in L^2(0,T;H^4(\Omega)).$
\end{proof}
Global well-posedness of the linearized model \eqref{HomKdVBurgers-1} now follows from the Proposition \ref{wellposednessprop1} that we have just proved.

\begin{rem}
  One can interpolate between part (i) and part (ii) of the above proposition with respect to the smoothness of initial data and get the corresponding well-posedness and regularity result in fractional spaces.  For example, let $y_0\in H^s(\Omega)$ ($s\in [0,3]$) so that it satisfies the compatibility conditions $y_0(0)=y_0(1)=0$ if $s\in [0,3/2]$ and the compatibility conditions $y_0(0)=y_0(1)=y_0'(1)=0$ if $s\in (3/2,3].$ Then, with $a=a(x)$  sufficiently smooth, one has  $$y\in Y_T^s:= \{\psi\in C([0,T];H^s(\Omega))\cap L^2(0,T;H^{s+1}(\Omega))\,|\,\psi_x\in C([0,1];L^2(0,T))\}.$$  The arguments in Step 3 of the proof of the above proposition can be easily extended to the nonhomogeneous equation $\displaystyle y_{t} +y_{x} + y_{xxx} + {\lambda} y = a(x)y_x(0,\cdot)+f.$ One can first study this equation with $s=0$, $f\in L^1(0,T;L^2(\Omega)),$ and secondly with $s=3$, $f\in W^{1,1}(0,T;L^2(\Omega)).$ Then, by interpolation, for $s\in (0,3)$, one can get $y\in Y_T^s$ if $f\in W^{s/3,1}(0,T;L^2(\Omega))$. Moreover, the following estimates are true:
  \begin{equation}\label{YsT}
    \|y\|_{Y_{T}^s}\lesssim \|y_0\|_{H^s(\Omega)}+\|f\|_{W^{s/3,1}(0,T;L^2(\Omega))},
  \end{equation} and for $s=3$,
  \begin{equation}\label{YsT3}
    \|y_t\|_{Y_{T}}\lesssim \|y_0\|_{H^3(\Omega)}+\|f\|_{W^{1,1}(0,T;L^2(\Omega))}.
  \end{equation}
\end{rem}
\subsection{Nonlinear model}

Consider the following nonlinear KdV equation with homogeneous boundary conditions.
\begin{equation}\label{nonlinKdV-wp}
 	\begin{cases}
 	\displaystyle y_{t} +y_{x} + y_{xxx} + {\lambda} y = a(x)y_x(0,\cdot) -(I-K)[\left(y+v\right)\left(y_{x} + v_x\right)] & \text { in } \Omega\times \mathbb{R_+},\\
    {y}(0,t) = {y}(1,t) = {y}_{x}(1,t) = 0 & \text { in } \mathbb{R_+},\\
    {y}(x,0)=y_0\in L^2(\Omega) & \text { in } \Omega,
 	\end{cases}
\end{equation} where $v=\Phi(y)$, $\Phi$ being the linear operator defined in Section \ref{ProofofThm} in the proof of Lemma \ref{inverselem}.
\begin{prop}\label{wellposednessprop2}
\begin{enumerate}
  \item[i)] Let $T'>0$ be arbitrary and $y_0,a\in L^2(\Omega)$.  Then, there exists $T\in (0,T')$ depending on the size of $y_0$ such that \eqref{nonlinKdV-wp} has a unique local solution $y\in C([0,T];L^2(\Omega))\cap L^2(0,T;H^1(\Omega))$ satisfying also $y_x\in C([0,1];L^2(0,T)).$ Moreover, if $a\in L^\infty(\Omega)$ and $\|y_0\|_{L^2(\Omega)}$ is sufficiently small, then $y$ extends as a global solution. In other words, $T$ can be taken as $T'$.
  \item[ii)] Let $a\in H^1(\Omega)$ and let $y_0\in H^3(\Omega)$ satisfy the computability conditions $y_0(0)=y_0(1)=y_0'(1)=0.$  Then, the local solution in part (i) enjoys the extra regularity $y\in C([0,T];H^3(\Omega))\cap L^2(0,T;H^4(\Omega)).$
\end{enumerate}
\end{prop}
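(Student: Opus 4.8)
The plan is to follow the three-step scheme of the proof of Proposition~\ref{wellposednessprop1} --- local wellposedness by a contraction mapping in the space $Y_T$ introduced in \eqref{ourspace}, global wellposedness by multiplier estimates, and interior regularity via the ``differentiate in time'' trick --- the only genuinely new ingredient being the control of the nonlocal nonlinearity $\mathcal{N}(y):=(I-K)\big[(y+v)(y_x+v_x)\big]$, $v:=\Phi(y)$, in $L^1(0,T;L^2(\Omega))$.

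For part (i), I would recast \eqref{nonlinKdV-wp} as the fixed point $y=[\Psi z](t):=S(t)y_0+\int_0^tS(t-s)\big[Fz-\mathcal{N}(z)\big](s)\,ds$ on $Y_T$, where $S(t)$, $F$ and the semigroup/trace estimates from \cite{BSZ03} are exactly as in the proof of Proposition~\ref{wellposednessprop1}; the linear part is estimated there verbatim, so the issue is the bound on $\mathcal{N}$. Expanding $(z+v)(z_x+v_x)=zz_x+zv_x+vz_x+vv_x$, I would treat the three mixed terms with Lemma~\ref{inverselem} and the remark after it: $\Phi$ maps $L^2(\Omega)$ boundedly into $H^2(\Omega)$, so $\|v\|_{L^\infty(\Omega)}+\|v_x\|_{L^\infty(\Omega)}\lesssim\|z\|_{L^2(\Omega)}$ (cf.\ \eqref{linftyrem}); hence $\|zv_x\|_{L^2(\Omega)}+\|vv_x\|_{L^2(\Omega)}\lesssim\|z\|_{L^2(\Omega)}^2$ and $\|vz_x\|_{L^2(\Omega)}\lesssim\|z\|_{L^2(\Omega)}\|z\|_{H^1(\Omega)}$, whose time integrals are $\lesssim(T+T^{1/2})\|z\|_{Y_T}^2$. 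The dispersive term $zz_x=\tfrac12\partial_x(z^2)$ is handled by the standard KdV bilinear estimate: writing $\|zz_x\|_{L^2(\Omega)}\lesssim\|z\|_{L^2(\Omega)}^{1/2}\|z\|_{H^1(\Omega)}^{3/2}$ and using H\"older in time gives $\|zz_x\|_{L^1(0,T;L^2(\Omega))}\lesssim T^{1/4}\|z\|_{Y_T}^2$ (equivalently, one may cite the bilinear estimates in \cite{BSZ03}). Since $I-K$ is bounded on $L^2(\Omega)$, this yields $\|\mathcal{N}(z)\|_{L^1(0,T;L^2(\Omega))}\lesssim T^{\theta}(1+T)\|z\|_{Y_T}^2$ with some $\theta>0$, and, using the linearity of $\Phi$ in the usual bilinear difference decomposition, $\|\mathcal{N}(z_1)-\mathcal{N}(z_2)\|_{L^1(0,T;L^2(\Omega))}\lesssim T^{\theta}(1+T)(\|z_1\|_{Y_T}+\|z_2\|_{Y_T})\|z_1-z_2\|_{Y_T}$. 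Therefore $\Psi$ is a contraction on the closed ball of $Y_T$ of radius $\sim\|y_0\|_{L^2(\Omega)}$ once $T=T(\|y_0\|_{L^2(\Omega)})$ is small, which also explains the data-dependence of the existence time.

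For the global statement (under $a\in L^\infty(\Omega)$ and $\|y_0\|_{L^2(\Omega)}$ small), the $L^2$-energy estimate for \eqref{nonlinKdV-wp} is the one already carried out in \eqref{ch414}--\eqref{ch417}, so \eqref{bernoulli}--\eqref{Nonlinw} keep $\|y(t)\|_{L^2(\Omega)}$ small and bounded on the maximal interval of existence; combining this with the $xy$-multiplier (as in \eqref{Iden06}--\eqref{Iden09}) and the semigroup trace bounds \eqref{semi-prop1}--\eqref{semi-prop3} applied to the forcing $Fy-\mathcal{N}(y)$ shows $\lim_{T\to T_{\max}^-}\|y\|_{Y_T}<\infty$, so $T_{\max}=T'$. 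For part (ii) I would reproduce Step~3 of the proof of Proposition~\ref{wellposednessprop1}: $q:=y_t$ formally solves the $t$-differentiated equation, which --- since $v_t=\Phi q$ and $v_{xt}=(\Phi q)_x$ --- is \emph{linear} in $q$, with the homogeneous boundary conditions, with coefficients built from the known $y$ and $v=\Phi y\in C([0,T];H^2(\Omega))$, and with initial datum $q_0=-y_0'-y_0'''-\lambda y_0+y_0'(0)a-(I-K)\big[(y_0+v_0)(y_0'+v_0')\big]$, $v_0:=\Phi(y_0)$, which lies in $L^2(\Omega)$ exactly because $y_0\in H^3(\Omega)$ satisfies the stated compatibility conditions. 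The coefficient norms entering this linear problem are finite and, on short time intervals, small, so it has a unique solution $q\in Y_T$ by the contraction argument of part (i), and linearity allows extension to all of $[0,T]$. Setting $y:=y_0+\int_0^tq(\cdot,s)\,ds$ and invoking uniqueness from part (i) gives $y_t=q\in Y_T$; then solving the PDE for $y_{xxx}$ and bounding $\|y_{xxx}(t)\|_{L^2(\Omega)}$ with the Gagliardo--Nirenberg inequalities $\|y_x\|_{L^2}\lesssim\|y\|_{L^2}^{2/3}\|y_{xxx}\|_{L^2}^{1/3}$, $\|y_{xx}\|_{L^2}\lesssim\|y\|_{L^2}^{1/3}\|y_{xxx}\|_{L^2}^{2/3}$, the trace inequality $|y_x(0,t)|\le\|y_{xx}(t)\|_{L^2(\Omega)}$, and $\|v\|_{H^2(\Omega)}+\|v_x\|_{L^\infty(\Omega)}\lesssim\|y\|_{L^2(\Omega)}$ shows that the nonlinear term is sub-linear in $\|y_{xxx}(t)\|_{L^2(\Omega)}$ and can be absorbed, giving $y\in C([0,T];H^3(\Omega))$; one further $x$-derivative of the equation, estimated the same way, gives $y\in L^2(0,T;H^4(\Omega))$.

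The hard part is the nonlinear estimate of part (i): producing a bound for $\mathcal{N}$ in $L^1(0,T;L^2(\Omega))$ with a \emph{strictly positive} power of $T$, since this is what makes $\Psi$ a contraction and forces the dependence of the existence time on the data size. The remaining difficulty is purely organizational --- keeping the nonlocal quantities $v$, $v_x$, $v_{xx}$ in function spaces compatible with the KdV smoothing and trace estimates --- and for this Lemma~\ref{inverselem} and its remark, together with the boundedness of $I-K$, provide exactly the regularity of $\Phi$ that is needed, so no genuinely new analytical obstacle appears beyond what is already present in Proposition~\ref{wellposednessprop1}.
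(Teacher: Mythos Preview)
Your proposal is correct and follows essentially the same strategy as the paper: a contraction on $Y_T$ using the semigroup estimates of \cite{BSZ03} for local existence, the stabilization estimate of Section~\ref{ProofofThm2} for global existence under small data, and time-differentiation for the $H^3$--regularity. The minor differences are organizational. For the nonlinear estimate you exploit the stronger smoothing $\Phi:L^2(\Omega)\to H^2(\Omega)$ to put $v,v_x$ directly in $L^\infty(\Omega)$, whereas the paper first proves $\|v\|_{Y_T}\le c_{\tilde k}\|z\|_{Y_T}$ (via \eqref{l0}, \eqref{l1}, and an explicit bound on $\|v_x(x)\|_{L^2(0,T)}$) and then treats all four bilinear pieces $zz_x,\,zv_x,\,vz_x,\,vv_x$ symmetrically through \cite[Lemma~3.1]{BSZ03}, obtaining the factor $(T^{1/2}+T^{1/3})$ rather than your $T^{1/4}$. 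For the regularity step the paper does not solve the $q$--equation sequentially with coefficients coming from the already-constructed $y$, as you do; instead it runs a single contraction for the pair $(y,q)$ on the product space $B_{T,r}\subset Y_T^3\times Y_T$, with $q$ solving \eqref{nonlinKdV-wp-q-2} and $y:=y_0+\int_0^tq\,ds$ solving \eqref{nonlinKdV-wp-q-1}. Both routes are standard; the coupled fixed point avoids having to argue separately that $y_0+\int_0^t q\,ds$ lies in $Y_T$ before invoking uniqueness, while your sequential approach makes more explicit use of the linearity (in $q$) of the differentiated equation.
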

\begin{proof}

\textbf{Step 1 - Local wellposedness: } At first, we set a nonlinear operator $\Upsilon$ as follows:
 \begin{equation}\label{soloperator2}y=[\Upsilon z](t):= S(t)y_0+\int_0^tS(t-s)Fz(s)ds,\end{equation} where $Fz:= -\lambda z+ a(\cdot)z_x(0,\cdot)-(I-K)[\left(z+v\right)\left(z_{x} + v_x\right)]$ with $v=\Phi(z).$ Here, we consider $\Upsilon$ on a set given by $S_{T,r}:=\{z\in Y_T,\,\|z\|_{Y_T}\le r\},$ where $Y_T$ is as in \eqref{ourspace}.  The parameters $T,r>0$ will be determined later. $S_{T,r}$ is a complete metric subspace of $Y_T$ with respect to the metric induced by the norm of $Y_T$.  Since $v=\Phi z$, due to \eqref{l0} we have
\begin{equation}\label{vzrel2}
   \|v\|_{C([0,T];L^2(\Omega))}\le C_0\|z(t)\|_{C([0,T];L^2(\Omega))}.
\end{equation}  Similarly, using \eqref{l1} we deduce
 \begin{equation}\label{vzrel4}
   \|v\|_{L^2(0,T);H^1(\Omega))}\le  C_1\|z(t)\|_{L^2(0,T);L^2(\Omega))}.
 \end{equation} Finally,

  \begin{multline}\label{vzrel5}
   \|v_x(x)\|_{L^2(0,T)}^2=\int_0^T\left|\int_0^x\tilde{k}_x(x,y)z(y,t)dy\right|^2dt
   \le \left(\int_0^1|\tilde{k}_x(x,y)|^2dy\right)\|z\|_{L^2(0,T);L^2(\Omega))}^2\,,
 \end{multline}  from which it follows that

   \begin{equation}\label{vzrel6}
   \sup_{x\in (0,1)}\|v_x(x)\|_{L^2(0,T)}\\
   \le \|z\|_{L^2(0,T);L^2(\Omega))}\sup_{x\in (0,1)}\left(\int_0^1|\tilde{k}_x(x,y)|^2dy\right)^\frac{1}{2}.
 \end{equation}
Combining \eqref{vzrel2}, \eqref{vzrel4}, and \eqref{vzrel6}, we have
   \begin{equation}\label{vzrel7}
   \|v\|_{Y_T}\le c_{\tilde{k}}\|z\|_{Y_T}\,,
 \end{equation} where $c_{\tilde{k}}>0$ is a constant which only depends on various finite norms of $\tilde{k}$.  Taking the $Y_{T}$ norm of both sides of \eqref{soloperator2}, using the same semigroup estimates on $Y_T$ and the boundedness of $I-K$, we obtain
    \begin{multline}\label{vzrel8}
   \|\Upsilon z\|_{Y_T}\le c_0\|y_0\|_{Y_T}+c_1\int_0^{T}\left\|[Fz](\cdot,s)\right\|_{L^2(\Omega)}ds\\
   \le c_0\|y_0\|_{Y_T}+c_1\int_0^{T}\left\|a(\cdot)z_x(0,s)-\lambda z-(I-K)[(z+v)(z_x+v_x)]\,\right \|_{L^2(\Omega)}ds\\
   \le c_0\|y_0\|_{Y_T}+c_1\int_0^{T}\left[\left\|a(\cdot)z_x(0,s)-\lambda z\right\|_{L^2(\Omega)}+\left\|zz_x\right \|_{L^2(\Omega)}+\left\|zv_x\right \|_{L^2(\Omega)}+\left\|vz_x\right \|_{L^2(\Omega)}+\left\|vv_x\right \|_{L^2(\Omega)}\right]ds\\
   \le c_0\|y_0\|_{Y_T}
   +c_1\left[(1+T)^\frac{1}{2}\sqrt{T}(1+\|a\|_{L^2(\Omega)})\|z\|_{Y_T}+(T^\frac{1}{2}+T^\frac{1}{3})
   \left(\|z\|_{Y_T}^2+2\|z\|_{Y_T}\|v\|_{Y_T}+\|v\|_{_{Y_T}}^2\right)\right]\\
   \le c_0\|y_0\|_{Y_T}+c_1\left[(1+T)^\frac{1}{2}\sqrt{T}(1+\|a\|_{L^2(\Omega)})+(1+3c_{\tilde{k}})(T^\frac{1}{2}+T^\frac{1}{3})
   \|z\|_{Y_T}\right]\|z\|_{Y_T},
 \end{multline} where the fourth inequality follows from \cite[Lemma 3.1]{BSZ03}. Let us set $r=2c_0\|y_0\|_{Y_T}$\,, and choose $T>0$ to be small enough that $$c_1\left[(1+T)^\frac{1}{2}\sqrt{T}(1+\|a\|_{L^2(\Omega)})+(1+3c_{\tilde{k}})(T^\frac{1}{2}+T^\frac{1}{3})
   r\right]\le\frac{1}{2}.$$ With such choice of $(r,T)$, we get $\|\Upsilon z\|_{Y_T}\le r$ for all $z\in S_{T,r}$.  Therefore, $\Upsilon$ is a map from $S_{T,r}$ into $S_{T,r}$.

   Now, we claim that $\Upsilon$ is indeed a contraction on $S_{T,r}$ if $T$ is sufficiently small.  In order to see this, let $z,z'\in S_{T,r}.$  Then, similar to \eqref{vzrel8}, we have
   \begin{multline}\label{vzrel9}
   \|\Upsilon z-\Upsilon z'\|_{Y_T}\le c_1\int_0^{T}\left\|[Fz-Fz'](\cdot,s)\right\|_{L^2(\Omega)}ds\\
   \le c_1\int_0^{T}\left\|a(\cdot)(z_x(0,s)-z'_x(0,s))-\lambda (z-z')\right\|_{L^2(\Omega)}ds\\
   +c_1\int_0^T\left[\left\|zz_x-z'z'_x\right \|_{L^2(\Omega)}+\left\|zv_x-z'v'_x\right \|_{L^2(\Omega)}+\left\|vz_x-v'z'_x\right \|_{L^2(\Omega)}+\left\|vv_x-v'v'_x\right \|_{L^2(\Omega)}\right]ds\\
   \le c_1(1+T)^\frac{1}{2}\sqrt{T}(1+\|a\|_{L^2(\Omega)})\|z-z'\|_{Y_T}+c_1(T^\frac{1}{2}+T^\frac{1}{3})(\|z\|_{Y_T}+\|z'\|_T)\|z-z'\|_{Y_T}\\
   +c_1(T^\frac{1}{2}+T^\frac{1}{3})(\|z'\|_{Y_T}\|v-v'\|_{Y_T}+\|v\|_T\|z-z'\|_{Y_T})+c_1(T^\frac{1}{2}+T^\frac{1}{3})(\|z\|_{Y_T}\|v-v'\|_{Y_T}+\|v\|_{Y_T}\|z-z'\|_{Y_T})\\
   +c_1(T^\frac{1}{2}+T^\frac{1}{3})(\|v\|_{Y_T}+\|v'\|_{Y_T})\|v-v'\|_{Y_T}.
 \end{multline}

 Now, using \eqref{vzrel7}, for the same $r$ as before, but choosing $T$ smaller if necessary, we obtain
 $$\|\Upsilon z-\Upsilon z'\|_{Y_T}\le\rho \|z-z'\|_{Y_T}$$ for some $\rho\in (0,1)$.  Then, by the Banach contraction theorem, we get the existence and uniqueness of a local solution in $S_{T,r}$.

\textbf{Step 2 - Regularity:} Let $y_0\in D(A)$. We define the closed space $$B_{T,r}:=\{(\psi,\varphi)\in Y_T^3\times Y_T\,|\,\psi\in C([0,T];L^2(\Omega))\cap L^2(0,T;H^1(\Omega)), \varphi=\psi_t, \|\psi\|_{Y_T^3}+\|\varphi\|_{Y_T}\le r\}.$$
Now, given $(z,\tilde{z})\in B_{T,r}$ let $q$ be a solution of

\begin{equation}\label{nonlinKdV-wp-q-2}
 	\begin{cases}
 	\displaystyle q_{t} +q_{x} + q_{xxx} + {\lambda} q \\
 = a(x)q_x(0,\cdot) - (I-K)[(\tilde{z}+\tilde{v})(z_x+v_{x})-(z+v)(\tilde{z}_{x}+\tilde{v}_{x})] & \text { in } \Omega\times (0,T),\\
    {q}(0,t) = {q}(1,t) = {q}_{x}(1,t) = 0 & \text { in } (0,T),\\
    {q}(x,0)= q_0:=-y_0'-y_0'''-\lambda y_0+a(x)y_0'(0)-(y_0+v_0)(y_0'+v_0') & \text { in } \Omega,
 	\end{cases}
\end{equation} where $v=\Phi(z)$, $v_0=\Phi(y_0)$, $\tilde{v}=\Phi(\tilde{z})$. Set $y=y_0+\int_0^tqds$.  Then, $y_t=q$ and $y$ solves

\begin{equation}\label{nonlinKdV-wp-q-1}
 	\begin{cases}
 	\displaystyle y_{t} +y_{x} + y_{xxx} + {\lambda} y = a(x)y_x(0,\cdot) - (I-K)[(z+v)(z_x+v_x)] & \text { in } \Omega\times (0,T),\\
    {y}(0,t) = {y}(1,t) = {y}_{x}(1,t) = 0 & \text { in } (0,T),\\
    {y}(x,0)=y_0 & \text { in } \Omega.
 	\end{cases}
\end{equation}
We set an operator $\Theta: (z,\tilde{z})\mapsto (y,q)$ associated with the system of equations given by \eqref{nonlinKdV-wp-q-2}-\eqref{nonlinKdV-wp-q-1}.   One can show that for suitable $r$ and small $T$, the operator $\Theta$ maps $B_{T,r}$ onto itself in a contractive manner.  This can be done by obtaining the same type of estimates given in Step 1 for both the solution of \eqref{nonlinKdV-wp-q-2} and \eqref{nonlinKdV-wp-q-1}.  Therefore, it has a unique fixed point whose first component is the regular local solution we are looking for.

\textbf{Step 3 - Global solutions:} Global wellposedness in $Y_T$ with small initial datum follows directly from the stabilization estimate proved in Section \ref{ProofofThm2}. 
\end{proof}
Global well-posedness of the nonlinear modified target system \eqref{ch414} now follows from the Proposition \ref{wellposednessprop2} that we just proved.

\section{Using a single controller}
\subsection{Smaller decay rate}
Although we studied the model \eqref{KdVBurgers} with two controls at the right hand side, it is also possible to use only one control. For example \cite{Cor14} proves exponential stability with the control acting only from the Neumann boundary condition when $L$ is not of critical length. When $L$ is not restricted to uncritical lengths, we can still obtain exponential stability with a single Dirichlet control rather than a Neumann control by using the pseudo-backstepping method above. However, this causes a smaller rate of decay. Consider for instance the plant
\begin{equation}\label{singlecontrol}
 	\begin{cases}
 	\displaystyle u_{t} + u_{x} + u_{xxx} =0 & \text { in } \Omega\times \mathbb{R_+},\\
    u(0,t) = 0, u(L,t) = U(t), u_{x}(L,t)=0  & \text { in } \mathbb{R_+},\\
    u(x,0)=u_0(x) & \text { in } \Omega.
 	\end{cases}
\end{equation}
Then the backstepping transformation \eqref{mod-transform} gives the following target system
\begin{equation}\label{HomKdVBurgers-single-1}
 	\begin{cases}
 	\displaystyle \tilde{w}_{t} +\tilde{ w}_{x} + \tilde{w}_{xxx} + \lambda \tilde{w} = \tilde{k}_y(x,0)\tilde{w}_x(0,t) & \text { in } \Omega\times \mathbb{R_+},\\
    \tilde{w}(0,t) = \tilde{w}(L,t) = 0, \tilde{w}_{x}(L,t) = -\int_0^L\tilde{k}_x(L,y)u(y,t)dy  & \text { in } \mathbb{R_+},\\
    \tilde{w}(x,0)=\tilde{w}_0(x):= u_0-\int_0^x\tilde{k}(x,y)u_0(y)dy & \text { in } \Omega.
 	\end{cases}
\end{equation}
If we multiply the above system by $\tilde{w}$, integrate over $(0,L)$, and use integration by parts, the Cauchy-Schwarz inequality, and boundary conditions we obtain
\begin{equation}\label{HomKdVBurgers-single-2}
 	\frac{1}{2}\frac{d}{dt}\|\tilde{w}(t)\|_{L^2(\Omega)}^2+ \left(\lambda-\frac{1}{2}\|\tilde{k}_y(\cdot,0)\|_{L^2(\Omega)}^2-\frac{1}{2}\|\tilde{k}_x(L,\cdot )\|_{L^2(\Omega)}^2\|(I-K)^{-1}\|_{B[L^2(\Omega)]}^2 \right)\|\tilde{w}(t)\|_{L^2(\Omega)}^2\le 0.
\end{equation}
Comparing \eqref{HomKdVBurgers-single-2} and \eqref{HomKdVBurgers-3} we see that we still achieve \eqref{HomKdVBurgers-4} where $\alpha= \lambda-\frac{1}{2}\|\tilde{k}_y(\cdot,0)\|_{L^2(\Omega)}^2$ is replaced by $\beta=\left(\lambda-\frac{1}{2}\|\tilde{k}_y(\cdot,0)\|_{L^2(\Omega)}^2-\frac{1}{2}\|\tilde{k}_x(L,\cdot )\|_{L^2(\Omega)}^2\|(I-K)^{-1}\|_{B[L^2(\Omega)]}^2 \right)$. Recall that in Lemma \ref{alemlambda} we showed $\|\tilde{k}_y(\cdot,0)\|_{L^2(\Omega)} \sim \lambda $. One can also get $\|\tilde{k}_x(L,\cdot )\|_{L^2(\Omega)} \sim \lambda  $ by using similar arguments. Moreover, using the calculations in \cite{Liu03} we deduce that $\|(I-K)^{-1}\|_{B[L^2(\Omega)]}\sim 1+\lambda e^{C\lambda}$ where $C>0$ depends only on $L$. Hence positivity of $\beta$ is guaranteed for a sufficiently small choice of $\lambda$. As a result, the decay rate decreases while the exponential stability still holds.
\begin{rem} Glass and Guerrero \cite{Glass10}  proved that \eqref{singlecontrol} is exactly controllable if and only if $L$ does not belong to a set of critical lengths $\mathcal{O}$, defined by them, which is different than $\mathcal{N}$.  They showed that if $L\in \mathcal{O}$, then the following problem has a nontrivial solution:
\begin{equation}\label{phieq1}
 	\begin{cases}
 	\varphi'''+\varphi'=\lambda \varphi & \text { in } (0,L),\\
    \varphi(0) = \varphi(L) = \varphi'(0)=\varphi''(L)=0.
 	\end{cases}
\end{equation} Moreover, it was found that for any $u_0\in L^2(\Omega)$ and control input $U\in L^2(0,T)$, the function 
\begin{equation}\label{ftcons}
e^{-\lambda t}\int_0^Lu(x,t;U)\varphi(x)dx 
\end{equation} remains constant in time, where $u=u(x,t;U)$ is the corresponding trajectory for \eqref{singlecontrol} with control input $U$. Regarding the same KdV system, we construct a boundary feedback which yields stabilizability with a certain decay rate for all $L>0$ including $L\in \mathcal{O}$. This shows that for a certain control input $U$, the integral $\int_0^Lu(x,t;U)\varphi(x)dx$ decays in time because $\left|\int_0^Lu(x,t;U)\varphi(x)dx\right|\le \|u(t;U)\|_{L^2(\Omega)}\|\varphi\|_{L^2(\Omega)}$,  where $\|u(t;U)\|_{L^2(\Omega)}$ decays exponentially. Therefore, since \eqref{ftcons} is invariant in time, it follows that $|e^{-\lambda t}|$ must increase, which is only possible if $Re(\lambda)<0$. However, if $Re(\lambda)<0$, then $|e^{-\lambda t}|\rightarrow \infty$. Since \eqref{ftcons} is valid for all control inputs, we conclude in particular that $\int_0^Lu(x,t;0)\varphi(x)dx\rightarrow 0$ $(U(t)\equiv 0$). This shows that any uncontrollable trajectory corresponding to some initial state $u_0$ with no feedback is attracted to the orthogonal complement of the span of the set consisting of the real and imaginary parts of the nontrivial solutions of \eqref{phieq1}.
\end{rem}
\begin{rem}
Regarding the nonlinear KdV equation with same boundary conditions, Glass and Guerrero \cite{Glass10} obtained the exact controllability given that the domain is of uncritical length and the initial and final states are small.  This was not an if and only if statement unlike the linear problem.  Therefore, we do not know whether the exact controllability on domains of critical lengths is true with the control acting at the right Dirichlet b.c.  On the other hand, our feedback control design can be easily extended to the nonlinear system for small data as in Section 2.2.   The nonlinear case is quite interesting because maybe the nonlinear term $uu_x$ is creating a further stability effect on the solutions, which might drive them to zero by themselves. As far as we know, the exact controllability for the (nonlinear) KdV equation as well as the decay of solutions to zero by themselves in the presence of the nonlinear term $uu_x$ remain as open problems on critical length domains, see for instance Cerpa \cite{cer14}.
\end{rem}

\subsection{A second order feedback law}
 In this section, we check whether it is possible to stabilize the solutions of the KdV equation by using the feedback law $u(L,t)=U(t)$ with the input $U(t)=u_{xx}(L,t)$.  In order to gain some intuition regarding this problem, let us consider the linearized model \eqref{singlecontrol} with the input $U(t)=u_{xx}(L,t)$.  Multiplying the main equation in \eqref{singlecontrol} by $u$ and integrating over $\Omega$ by using the given boundary conditions, one obtains
\begin{equation}\label{newL2iden}
  \frac{d}{dt}\|u(t)\|_{L^2(\Omega)}^2 = -\frac{3}{2}|u(L,t)|^2-\frac{1}{2}|u_x(0,t)|^2\le 0.
\end{equation} The inequality \eqref{newL2iden} shows that $\|u(t)\|_{L^2(\Omega)}$ is non-increasing, but it is not clear whether it decays to zero.  In order to better understand the behavior of the solution, one generally must study the spectral properties of the corresponding evolution.  Regarding \eqref{singlecontrol}, one can study the operator $$Au=-u'''-u',\,\,\,D(A)=\{u\in H^3(\Omega)\,|\,u(0)=u'(L)=u(L)-u''(L)=0\}.$$  However, it is quite difficult to analyze the eigenvalues of this operator. This is because the characteristic equation corresponding to the eigenvalue problem $Au=\lambda u$ takes the form $r^3+r+\lambda =0$, which is not easy to study. Therefore, we will consider this problem on the rather simplified model given below, neglecting the first order term $u_x$:
\begin{equation}\label{simplemodel}
 	\begin{cases}
 	\displaystyle u_{t} + u_{xxx} =0 & \text { in } \Omega\times \mathbb{R_+},\\
    u(0,t) = 0, u(L,t) = u_{xx}(L,t), u_{x}(L,t)=0  & \text { in } \mathbb{R_+},\\
    u(x,0)=u_0(x) & \text { in } \Omega,
 	\end{cases}
\end{equation} where the inequality \eqref{newL2iden} takes the form
\begin{equation}\label{newL2iden2}
  \frac{d}{dt}\|u(t)\|_{L^2(\Omega)}^2 = -|u(L,t)|^2-\frac{1}{2}|u_x(0,t)|^2\le 0.
\end{equation}
\subsection*{Spectral properties}
 The operator which generates the evolution corresponding to  \eqref{simplemodel} is a third order dissipative differential operator given by $$Au=-u''',\,\,\,D(A)=\{u\in H^3(\Omega)\,|\,u(0)=u'(L)=u(L)-u''(L)=0\},$$ which has compact resolvent and spectrum involving countably many eigenvalues $\{\lambda_k\}_{k\in \mathbb{Z}}$ satisfying $\text{Re}\lambda_k\le 0$. Moreover, these eigenvalues satisfy the properties given in the lemma below, whose proof uses the approach presented in \cite[Prop 2.2]{Zhang001} and \cite[Prop 3.1]{Zhang002}.
\begin{lem}
  $\lambda_k = -\frac{8\pi^3}{3\sqrt{3}L^3}|k|^3 $ as $|k|\rightarrow \infty$. Moreover, $\exists \eta<0$ s.t. $\text{Re}\lambda_k<\eta$ $\forall k\in \mathbb{Z}$.
\end{lem}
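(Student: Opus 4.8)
The plan is to reduce the eigenvalue problem to a scalar characteristic equation, locate its large roots by a Birkhoff--Tamarkin type asymptotic analysis, and obtain the uniform gap by combining these asymptotics with the dissipation identity \eqref{newL2iden2}.

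First I would set up the characteristic determinant. For $\lambda\neq0$ let $\mu$ be a fixed cube root of $-\lambda$, so that the characteristic polynomial $r^{3}+\lambda=0$ of $-u'''=\lambda u$ has roots $r_{1}=\mu$, $r_{2}=\omega\mu$, $r_{3}=\omega^{2}\mu$ with $\omega=e^{2\pi i/3}$. A solution of the ODE is $u=\sum_{j}c_{j}e^{r_{j}x}$, and imposing $u(0)=0$, $u'(L)=0$, $u(L)-u''(L)=0$ gives a homogeneous linear system for $(c_{1},c_{2},c_{3})$; then $\lambda\in\sigma(A)$ iff this system is singular. Writing $E_{j}=e^{r_{j}L}$ and using the Vieta relations $\sum_{i}r_{i}=0$, $\sum_{i<j}r_{i}r_{j}=0$ — which give $r_{j}r_{k}=r_{i}^{2}$ for $\{i,j,k\}=\{1,2,3\}$, hence $1+r_{j}r_{k}=1+r_{i}^{2}$ — the determinant collapses to
\[
\Delta(\mu)=E_{2}E_{3}(r_{2}-r_{3})(1+r_{1}^{2})-E_{1}E_{3}(r_{1}-r_{3})(1+r_{2}^{2})+E_{1}E_{2}(r_{1}-r_{2})(1+r_{3}^{2}).
\]

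Next I would analyze the zeros of $\Delta$ for $|\mu|$ large. After relabelling the roots we may assume $\mu=r_{1}$ lies in the closed sector $|\arg\mu|\le\pi/3$, so that $\text{Re}(r_{1}L)$ is the largest of the three. Away from the Stokes rays $\arg\mu=\pm\pi/3$ the factor $E_{1}=e^{\mu L}$ strictly dominates while $E_{2}E_{3}=e^{-\mu L}$ is exponentially negligible; dividing out $E_{1}$ and the leading power of $\mu$ (using $1+r_{j}^{2}\sim r_{j}^{2}$ and $r_{j}=\omega^{j-1}\mu$) reduces $\Delta(\mu)=0$ to $e^{(\omega-\omega^{2})\mu L}=-1+o(1)$, that is, since $\omega-\omega^{2}=i\sqrt{3}$,
\[
e^{i\sqrt{3}\,\mu L}=-1+o(1).
\]
The limiting equation $e^{i\sqrt{3}\mu L}=-1$ has, inside the sector, the simple roots $\mu=\tfrac{\pi(2k+1)}{\sqrt{3}\,L}$, $k\ge 0$; surrounding each of them with a circle of fixed small radius and applying Rouché's theorem (the $o(1)$ correction and the term $E_{2}E_{3}/E_{1}$ being subordinate on these circles, and the prefactor $E_{1}\mu^{3}(\omega-\omega^{2})$ being nonvanishing) shows that for $|k|$ large $\Delta$ has exactly one zero $\mu_{k}=\tfrac{\pi(2k+1)}{\sqrt{3}\,L}+o(1)$ per circle and none outside a large disk; moreover, since $\Delta(\mu)=0\iff\Delta(\bar\mu)=0$ and each such circle is symmetric about $\mathbb{R}$, that zero is real, so the large eigenvalues are real and negative. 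Hence $\lambda_{k}=-\mu_{k}^{3}=-\bigl(\tfrac{\pi(2k+1)}{\sqrt{3}\,L}\bigr)^{3}\bigl(1+o(1)\bigr)\sim-\dfrac{8\pi^{3}}{3\sqrt{3}\,L^{3}}|k|^{3}$, which is the first assertion. The \emph{main obstacle} is the analysis near the Stokes rays $\arg\mu=\pm\pi/3$, where $E_{1}$ and one of $E_{2},E_{3}$ become comparable and the dominant balance changes: one must check that no additional family of large eigenvalues is hidden there (heuristically, on those rays $\Delta\sim-E_{1}E_{3}(r_{1}-r_{3})(1+r_{2}^{2})$, whose vanishing would force $r_{1}=r_{3}$) and that the Rouché estimates are uniform in $k$; this is exactly where I would import the technique of \cite[Prop.~2.2]{Zhang001} and \cite[Prop.~3.1]{Zhang002}.

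Finally, for the uniform gap. Since $A$ has compact resolvent, $\sigma(A)$ consists of isolated eigenvalues of finite multiplicity, and $\text{Re}\,\lambda_{k}\le0$ by dissipativity; it remains to exclude the imaginary axis. The value $\lambda=0$ is not an eigenvalue: $u'''=0$ with $u(0)=u'(L)=0$ forces $u=a\,(x^{2}-2Lx)$, and then $u(L)-u''(L)=0$ reads $-a(L^{2}+2)=0$, so $u\equiv0$. If $\lambda=i\beta$ with $\beta\neq0$ were an eigenvalue with eigenfunction $u$, then evaluating \eqref{newL2iden2} on the solution $e^{\lambda t}u$ gives $0=2\,\text{Re}\,\lambda\,\|u\|_{L^{2}(\Omega)}^{2}=-|u(L)|^{2}-\tfrac12|u_{x}(0)|^{2}$, hence $u(L)=0$; together with $u'(L)=0$ and the boundary relation $u''(L)=u(L)=0$, the Cauchy data $(u,u',u'')(L)$ vanishes, so $u\equiv0$ by uniqueness for the third-order linear ODE — a contradiction. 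Thus $\sigma(A)\cap i\mathbb{R}=\emptyset$. By the asymptotics of the previous step, $\text{Re}\,\lambda_{k}\le-1$ for all but finitely many $k$; each of the remaining finitely many eigenvalues has strictly negative real part, say at most $\eta_{0}<0$, so $\eta:=\max\{\eta_{0},-1\}<0$ works, completing the proof.
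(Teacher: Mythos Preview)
Your proposal is correct and follows essentially the same route as the paper: form the characteristic determinant for $r^{3}+\lambda=0$, balance the dominant exponentials to extract the asymptotic equation (the paper writes this as $e^{(r_{2}-r_{1})L}=1$, you as $e^{i\sqrt{3}\mu L}=-1+o(1)$, both yielding the same leading asymptotic), and rule out the imaginary axis via the dissipation identity \eqref{newL2iden2}. Your version is somewhat more rigorous---you invoke Rouch\'e explicitly, flag the Stokes rays, and treat $\lambda=0$ separately---whereas the paper simply ``neglects small terms''; but the strategy is the same.
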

\begin{proof}
  Let $\lambda$ be an eigenvalue of $A$. Then, $\text{Re}\lambda \le 0$, and  we can assume wlog that $\text{Im}\lambda\le 0$ since $\bar{\lambda}$ is also an eigenvalue of $A$. Let us first see that $\text{Re}\lambda$ cannot be equal to zero.  To this end, let $i\xi$ be an eigenvalue with $\xi\in \mathbb{R}$ and $u$ be the corresponding eigenvector.  Then, we note that $$\text{Re}(Au,u)_{L^2(\Omega)}=-|u|^2(L)-\frac{1}{2}|u'(0)|^2=\text{Re}(i\xi\|u\|_{L^2(\Omega)}^2)=0.$$ We get $u(L)=u'(0)=0$, which implies  together with other boundary conditions $u\equiv 0$. This contradicts the fact that $u$ was an eigenvector.  Hence, $\text{Re}\lambda<0$.

  Let $r_i$ $(i=0,1,2)$ be the three roots of the characteristic equation $r^3+\lambda=0$ corresponding to the ode \begin{equation}\label{Aulambdau}
                                                                                                                 Au=\lambda u,
                                                                                                               \end{equation}with $r_1$ being the root in the first quadrant. Note that we have $r_i=\alpha^i r_1$, $\alpha=e^{\frac{2\pi i}{3}}$.
The solution of \eqref{Aulambdau} is $u(x)=\sum_{i=0}^2c_ie^{r_i x}$ where $\{c_i, i=0,1,2\}$, due to given boundary conditions, satisfy the system of equations
\[
\begin{cases} \sum_{i=0}^2c_i =0 \\ \sum_{i=0}^2c_ir_ie^{r_i L} =0 \\ \sum_{i=0}^2 c_i(1-r_i^2e^{r_i L})=0, \end{cases}\] which has a nontrivial solution if
$$\sum_{i=0}^2a_{ij}r_i(1-r_j^2)e^{(r_i+r_j)L}=0,$$ where $a_{12}=a_{20}=a_{01}=1$ and $a_{21}=a_{02}=a_{10}=-1$. Multiplying the above equation by $e^{-r_0L}$ and neglecting the relatively small terms which involve $e^{(r_1+r_2-r_0)L}$, we get
$$(1+\alpha)(1+\alpha^2r_0^2)e^{r_2L}+(1+\alpha r_0^2)e^{r_1L}=0.$$  Now, we see that we can further neglect the terms $(1+\alpha)e^{r_2L}$ and $e^{r_1L}$ since they are much smaller than $(1+\alpha)\alpha^2r_0^2e^{r_2L}$ and $\alpha r_0^2e^{r_1L}$, respectively.  Therefore, asymptotically, we get
$$(1+\alpha)\alpha e^{r_2L}+e^{r_1L}=0.$$ Observe that $\alpha(1+\alpha)=-1$.  Therefore, the asymptotic relation is reduced to $$e^{(r_2-r_1)L}=1,$$
from which, together with the definition of $\alpha,r_1$, and $r_2$, it follows that $\lambda_k=-r_{0,k}^3=-\frac{8\pi^3}{3\sqrt{3}L^3}|k|^3$ asymptotically. This property combined with the fact that $\text{Re}\lambda_k<0$ proves the second part of the lemma.
\end{proof}
It is easy to see that $A^*$ is defined by
$$A^*w=w''',\,\,\,D(A^*)=\{w\in H^3(\Omega)\,|\,w(0)=w'(0)=w(L)+w''(L)=0\}.$$
Now, the following result follows classically from the spectral properties of $A$ above.
\begin{lem}(\cite[Prop 2.1, 2.2]{Zhang001}, \cite[Prop 3.2]{Zhang002})
  $A$ is a discrete spectral operator, and all but a finite number of eigenvalues of $A$ correspond to one dimensional projections $E(\lambda;T)$. Both $A$ and $A^*$ have complete sets of eigenvectors, $\{\phi_k\}_{k\in \mathbb{Z}}$ and $\{\psi_j\}_{j\in \mathbb{Z}}$, respectively, satisfying $(\phi_k,\psi_j)_{L^2(\Omega)}=\delta_{kj}$ and forming dual Riesz bases for $L^2(\Omega)$.
\end{lem}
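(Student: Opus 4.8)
\emph{Sketch of proof.} The plan is to run the by-now standard spectral analysis of third--order non--self--adjoint two--point boundary value problems, as carried out in \cite{Zhang001} and \cite{Zhang002}, specialized to the boundary forms $u(0)=u'(L)=u(L)-u''(L)=0$ defining $D(A)$. I would assemble three facts: (a) $(zI-A)^{-1}$ is compact, so $\sigma(A)$ is a discrete set of eigenvalues of finite algebraic multiplicity --- already recorded above; (b) outside a sufficiently large disc every eigenvalue is algebraically simple with a rank--one Riesz projection; (c) $(zI-A)^{-1}$ obeys a uniform bound on a nested family of contours, each surrounding exactly one eigenvalue, apart from one fixed large contour enclosing the finitely many exceptional ones. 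Item (c) is precisely Dunford's criterion for $A$ to be a discrete spectral operator, and once it holds the Riesz--basis assertion follows automatically after grouping the finitely many exceptional indices.

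For (b) I would use the asymptotics from the previous lemma, $\lambda_k\sim-\tfrac{8\pi^3}{3\sqrt3\,L^3}|k|^3$, which shows consecutive eigenvalues are separated by a gap of order $|k|^2$. Writing a solution of $Au=\lambda u$ as $u(x)=\sum_{i=0}^{2}c_i e^{r_ix}$, with $r_i$ the three cube roots of $-\lambda$, and imposing the three boundary conditions produces a characteristic determinant $\Delta(\lambda)$ whose zero set is $\sigma(A)$; a Rouch\'e argument comparing $\Delta$ with its dominant exponential term isolates exactly one simple zero in each small disc about the asymptotic location, so $E(\lambda_k;A)$ is one dimensional for $|k|$ large and the remaining eigenvalues, finite in number, supply the ``all but finitely many'' clause.

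The heart of the matter, and the step I expect to be the main obstacle, is the uniform resolvent bound in (c). One writes the Green's function of $zI-A$ explicitly through the fundamental system $\{e^{r_ix}\}$ and the boundary forms, and shows $\|(zI-A)^{-1}\|_{B(L^2(\Omega))}\le C$ uniformly on the circles $|z-\lambda_k|=\delta|k|^2$ and on one fixed large circle. This reduces to inverting the $3\times3$ coefficient matrix away from the zeros of its determinant while controlling each factor $e^{r_ix}$ according to the sign of $\operatorname{Re}r_i$ in the relevant sector --- the Birkhoff/Shkalikov--type estimate of \cite[Prop.~2.2]{Zhang001} and \cite[Prop.~3.1]{Zhang002}. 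The delicate point is verifying that the present boundary conditions are regular in the sense of Birkhoff so that this estimate applies; granting it, $\sum_{|k|\le N}E(\lambda_k;A)\to I$ strongly with uniformly bounded partial sums, which yields the discrete--spectral--operator property and the Riesz basis $\{\phi_k\}$.

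Finally, for the dual basis: the adjoint $A^*w=w'''$ with $D(A^*)=\{w\in H^3(\Omega):w(0)=w'(0)=w(L)+w''(L)=0\}$ has the same structure, so the identical analysis gives a complete Riesz basis of eigenvectors $\{\psi_j\}$ with $A^*\psi_j=\overline{\lambda_j}\psi_j$. From $\lambda_k(\phi_k,\psi_j)_{L^2(\Omega)}=(A\phi_k,\psi_j)_{L^2(\Omega)}=(\phi_k,A^*\psi_j)_{L^2(\Omega)}=\lambda_j(\phi_k,\psi_j)_{L^2(\Omega)}$ one gets $(\phi_k,\psi_j)_{L^2(\Omega)}=0$ whenever $\lambda_k\neq\lambda_j$, while $(\phi_k,\psi_k)_{L^2(\Omega)}\neq0$ because $\{\phi_k\}$ is complete; rescaling the $\psi_k$ normalizes this pairing to $1$, giving $(\phi_k,\psi_j)_{L^2(\Omega)}=\delta_{kj}$ and the asserted dual Riesz--basis property.
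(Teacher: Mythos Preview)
Your sketch is correct and follows exactly the route taken in the cited references \cite{Zhang001,Zhang002}; there is no independent proof to compare against, since the paper itself does not prove this lemma but simply records it as a classical consequence of the eigenvalue asymptotics established in the preceding lemma, deferring entirely to those references. Your outline of (a) compact resolvent, (b) simplicity of large eigenvalues via the characteristic determinant and Rouch\'e, and (c) the uniform resolvent bound on separating contours yielding the discrete spectral property and Riesz basis is precisely the machinery those papers supply.
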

\subsection*{A special multiplier and stabilization} We set $Y:=\sum_{k}Y_k$ where $Y_k$ is defined by $$Y_k(u)=(u,\psi_k)_{L^2(\Omega)}\psi_k.$$ Then $Y$ is bounded and positive definite by the uniform $\ell^2$ convergence property of $\{\phi_k\}_{k\in \mathbb{Z}}$ and $\{\psi_j\}_{j\in \mathbb{Z}}$. Second, we define the symmetric, bounded, nonnegative operator $X=\sum_{k}\xi_kY_k$, where $\xi_k=-\frac{1}{2\text{Re}\lambda_k}$, which satisfies the additional property $$A^*X+XA+Y=0.$$ Using $Xu$ as a multiplier, we compute
\begin{equation}\label{XYcomp}
  \frac{d}{dt}(Xu,u)_{L^2(\Omega)} = (\frac{d}{dt}Xu,u)_{L^2(\Omega)}+(Xu,u_t)_{L^2(\Omega)}
  =(XAu,u)_{L^2(\Omega)}+(Xu,Au)_{L^2(\Omega)}
  =-(Yu,u)_{L^2(\Omega)}.
\end{equation}

\eqref{XYcomp} together with \eqref{newL2iden2} imply that
\begin{equation}\label{gronprep1}
  \frac{d}{dt}((I+X)u,u)_{L^2(\Omega)}\le -(Yu,u)_{L^2(\Omega)}.
\end{equation} Now, integrating in time and using the positive definiteness of $(I+X)$ and $Y$, applying Gronwall's inequality, we obtain the exponential decay of solutions in $L^2(\Omega)$, and the following theorem follows.
\begin{thm}\label{specthm}
  Let $u$ be a solution of the linearized KdV equation in \eqref{simplemodel}.  Then, there exists some $\gamma>0$ independent of $u_0$ such that $$\|u(t)\|_{L^2(\Omega)} \lesssim \|u_0\|_{L^2(\Omega)}e^{-\gamma t}$$ for $t\ge 0$.
\end{thm}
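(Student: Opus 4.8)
The plan is to convert the differential inequality \eqref{gronprep1} into exponential decay by a Gronwall argument, taking care that every constant is independent of $u_0$; essentially all of the analytic work has already been carried out above. First I would record quantitative versions of the qualitative statements made. Since $\{\psi_k\}_{k\in\mathbb{Z}}$ and $\{\phi_k\}_{k\in\mathbb{Z}}$ are dual Riesz bases of $L^2(\Omega)$, the operator $Y=\sum_kY_k$ is not merely positive definite but comparable to the identity: there are constants $0<c_1\le c_2$, depending only on $L$, with $c_1\|\varphi\|_{L^2(\Omega)}^2\le(Y\varphi,\varphi)_{L^2(\Omega)}\le c_2\|\varphi\|_{L^2(\Omega)}^2$ for all $\varphi\in L^2(\Omega)$. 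Likewise, the preceding spectral lemma furnishes $\eta<0$ with $\mathrm{Re}\,\lambda_k<\eta$ for every $k$, whence $0<\xi_k=-\tfrac1{2\,\mathrm{Re}\,\lambda_k}\le\tfrac1{2|\eta|}$; thus $X=\sum_k\xi_kY_k$ is a bounded nonnegative operator and $I\le I+X\le C_0 I$ with $C_0:=1+\|X\|_{B[L^2(\Omega)]}$, again depending only on $L$.

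Next I would feed these bounds into \eqref{gronprep1}. Writing $\Phi(t):=((I+X)u(t),u(t))_{L^2(\Omega)}$, the combination of \eqref{XYcomp} with \eqref{newL2iden2} recorded in \eqref{gronprep1} gives $\Phi'(t)\le-(Yu(t),u(t))_{L^2(\Omega)}\le-c_1\|u(t)\|_{L^2(\Omega)}^2\le-\tfrac{c_1}{C_0}\Phi(t)$, the last step because $\Phi(t)\le C_0\|u(t)\|_{L^2(\Omega)}^2$. Gronwall's inequality then yields $\Phi(t)\le\Phi(0)e^{-(c_1/C_0)t}$, and since $\|u(t)\|_{L^2(\Omega)}^2\le\Phi(t)$ and $\Phi(0)\le C_0\|u_0\|_{L^2(\Omega)}^2$, we obtain $\|u(t)\|_{L^2(\Omega)}^2\le C_0\|u_0\|_{L^2(\Omega)}^2e^{-(c_1/C_0)t}$, i.e. the claimed estimate with $\gamma:=c_1/(2C_0)$; as $c_1$ and $C_0$ depend only on $L$, the rate $\gamma$ is independent of $u_0$.

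Finally, a word on rigor: the identity $A^*X+XA+Y=0$ and the computation \eqref{XYcomp} are formal, so I would first establish them for $u_0\in D(A)$ --- where $u(\cdot)$ is a classical solution, $\Phi$ is $C^1$, and, expanding $u(t)=\sum_ka_k(t)\phi_k$, one checks $Xu(t)=\sum_k\xi_ka_k(t)\psi_k\in D(A^*)$ because $\xi_k\sim|\lambda_k|^{-1}$ (by the asymptotics $\lambda_k\sim-\tfrac{8\pi^3}{3\sqrt3\,L^3}|k|^3$) makes $\sum_k|\xi_ka_k(t)|^2|\lambda_k|^2$ summable --- and then pass to arbitrary $u_0\in L^2(\Omega)$ by density, using that $A$ generates a $C_0$-semigroup of contractions (dissipativity of $A$ and of $A^*$ being immediate from \eqref{newL2iden2} and its analogue for $A^*$). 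I do not expect a serious obstruction anywhere; the one point genuinely requiring care is upgrading ``positive definiteness of $Y$'' to the quantitative uniform bound $Y\ge c_1 I$, which is precisely what the dual Riesz basis structure delivers.
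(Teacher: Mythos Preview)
Your proposal is correct and follows essentially the same route as the paper: the paper summarizes the conclusion in a single sentence (``integrating in time and using the positive definiteness of $(I+X)$ and $Y$, applying Gronwall's inequality''), and you supply precisely the quantitative content behind that sentence --- the Riesz basis bound $Y\ge c_1 I$, the operator bound $I\le I+X\le C_0 I$ from $\xi_k\le \tfrac{1}{2|\eta|}$, and the resulting differential inequality $\Phi'\le -(c_1/C_0)\Phi$. Your added density paragraph is a reasonable justification for the formal computation \eqref{XYcomp}, which the paper does not address.
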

\begin{rem}
  Note that Theorem \ref{specthm} was proved for the simplified linearized model \eqref{simplemodel}.  The situation is more challenging for more general models involving other terms such as $u_x$ and/or the nonlinear term $uu_x$. Moreover, the approaches of \cite{Zhang001} and \cite{Zhang002} do not seem to directly apply to these more general problems under the boundary conditions $u(0)=u'(L)=u(L)-u''(L)=0$. This is due to the fact that the eigenvalue analysis gets much more challenging with a more complicated third order characteristic equation.  In order to simplify the eigenvalue analysis, one can still use the simpler operator $Au=-u'''$ treating $-u_x$ and/or $uu_x$ as source term(s).  But then, one needs the adjoint operator $A^*$ to satisfy very desirable boundary conditions so that the trace terms are cancelled out when one applies the special multiplier. However, this does not become the case with the given boundary conditions in the model. This issue is not present with the boundary conditions used in \cite{Zhang001} and \cite{Zhang002}.  Therefore, the case of more general equations with the first order term $u_x$ and/or the nonlinear term $uu_x$ remain interesting open problems.
\end{rem}
\section{Numerical simulations} We modify the finite difference scheme given in \cite{Pazato} to fit it into the present situation, where we have first order trace terms in the main equations of the target systems and inhomogeneous boundary inputs of feedback type in the original plant.   We numerically solve the KdV equation both in the controlled and uncontrolled cases.  We are also able to verify our main result numerically.  First, we simulate an uncontrolled solution of the KdV equation and then we simulate the controlled solution. From our simulations, one can see that the boundary controllers constructed using a pseudo-kernel effectively stabilize the solutions with a suitable choice of $\lambda$. The calculations are performed in Wolfram Mathematica\textsuperscript{\textregistered}11.

For simplicity, we consider only the linearised problem. The nonlinear problem can be treated in a similar way by including an additional fixed point argument to the algorithm we describe here.  We use the notation given in \cite{Pazato}. To this end, we set the discrete space $$X_J:=\{\tilde{w}=(\tilde{w}_0,\tilde{w}_1,...,\tilde{w}_J)\in \mathbb{R}^{J+1}\,|\,\tilde{w}_0=\tilde{w}_{J-1}=\tilde{w}_J=0\},$$ and the difference operators $\displaystyle (D^+\tilde{w})_j:=\frac{\tilde{w}_{j+1}-\tilde{w}_j}{\delta x}$, $\displaystyle (D^-\tilde{w})_j:=\frac{\tilde{w}_{j}-\tilde{w}_{j-1}}{\delta x}$ for $j=1,...,J-1$, and $\displaystyle D=\frac{1}{2}(D^++D^-)$. We will call the space and time steps $\delta x$ and $\delta t$ for $j=0,...,J,$ and $n=0,1,...,N$, respectively. Using this notation, the numerical approximation of the linearised target system \eqref{HomKdVBurgers-1} takes the form
\begin{eqnarray}
  \label{wjn1}\frac{\tilde{w}_{j}^{n+1}-\tilde{w}_j^n}{\delta t}+(\mathcal{A}\tilde{w}^{n+1})_j+\lambda \tilde{w}_j^{n+1}&=& \tilde{k}_y(x_j,0)\frac{\tilde{w}_{1}^{n}}{\delta x},\hspace{.1in} j=1,...,J-1\\
  \label{wjn2}\tilde{w}_0=\tilde{w}_{J-1}=\tilde{w}_J &=& 0, \\
  \label{wjn3} \tilde{w}_0  &=&\int_{x_{j-\frac{1}{2}}}^{x_{j^+\frac{1}{2}}}\tilde{w}_0(x)dx,\hspace{.1in} j=1,...,J-1,
\end{eqnarray} where $x_{j\mp\frac{1}{2}}=(j\mp\frac{1}{2})\delta x$, $x_j=j\delta x$. The $(J-1)\times (J-1)$ matrix $\mathcal{A}$ approximates $\tilde{w}_x+\tilde{w}_{xxx}$ and is defined by $\mathcal{A}:=D^+D^+D^-+D$. Let us set $\mathcal{C}:=(1+\delta t\lambda)I+\delta t A$.  Then, from the main equation, we obtain
$\tilde{w}_{j}^{n+1}=\mathcal{C}^{-1}\left(\tilde{w}_j^n+\frac{\delta t}{\delta x}\tilde{k}_y(x_j,0)\tilde{w}_{1}^{n}\right)$ for $j=1,...,J-1$.

In order to approximate the solution of the original plant \eqref{KdVBurgers} with feedback controllers, we use the succession idea in the proof of Lemma \ref{inverselem}.  Note that given $\tilde{w}$, $v$ is the fixed point of the equation $v=K(\tilde{w}+v)$.  For numerical purposes, let $m$ denote the number of iterations in the succession and set $v^0=\mathcal{K}\tilde{w}$, $v^{k}:=\mathcal{K}(\tilde{w}+v^{k-1})$ for $1\le k\le m$, where $\mathcal{K}$ is the numerical approximation of the integral in the definition of $K$.  Then, $v^m$ is an approximation of $v=\Phi(\tilde{w})$, and one gets an approximation of the original plant by setting $u(x_j,t_n):=\tilde{w}(x_j,t_n)+v^m(x_j,t_n)$.

On a domain of critical length, one can find time-independent solutions, as we mentioned in the introduction.  Figure \ref{uncont-sol} below shows such a solution on a domain of length $L=2\pi$ whose $L^2$-norm is preserved in time.
\text{}\\
\begin{figure}[H]
  \centering
   \includegraphics[scale=0.75]{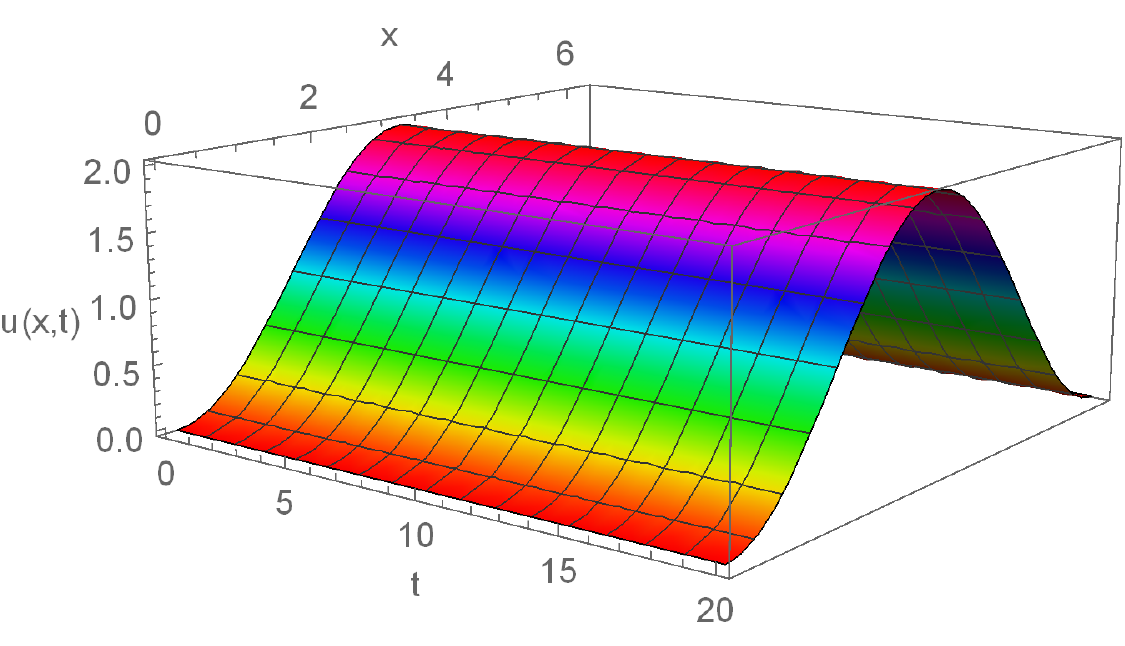}
  \caption{Uncontrolled solution with initial datum $u_0=1-\cos(x)$ on a domain of length $2\pi$.}\label{uncont-sol}
\end{figure}

If one applies the boundary controllers constructed with the same initial profile that the uncontrolled solution has in Figure \ref{uncont-sol}, then the new solution will decay to zero as we illustrate in Figure \ref{controlled}.

\begin{figure}[H]
  \centering
   \includegraphics[scale=0.75]{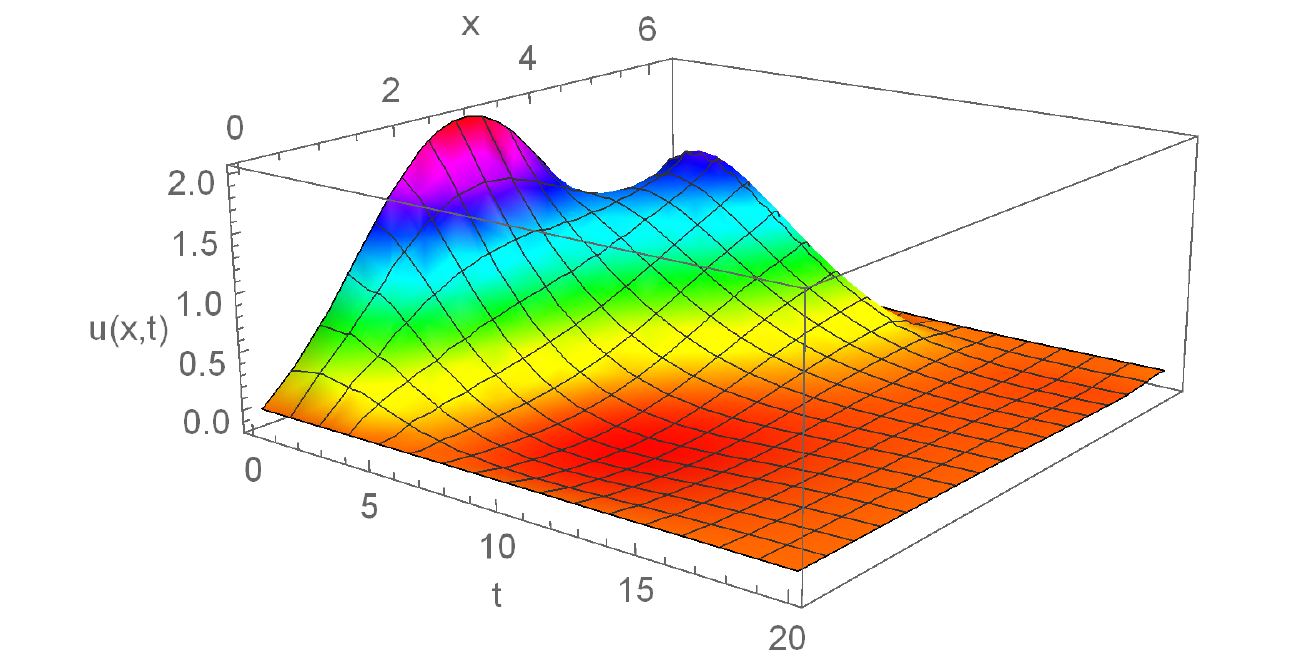}
  \caption{Controlled solution with initial datum $u_0=1-\cos(x)$,  $\lambda=0.03$, with a controller using the pseudo-kernel $\tilde{k}$ on a domain of length $2\pi$.}\label{controlled}
\end{figure}

Figure \ref{u1t} shows the controller behavior on the Dirichlet boundary condition at the right endpoint. As one can see, less control is needed as the wave gets supressed.\\

\begin{figure}[H]
  \centering
   \includegraphics[scale=0.75]{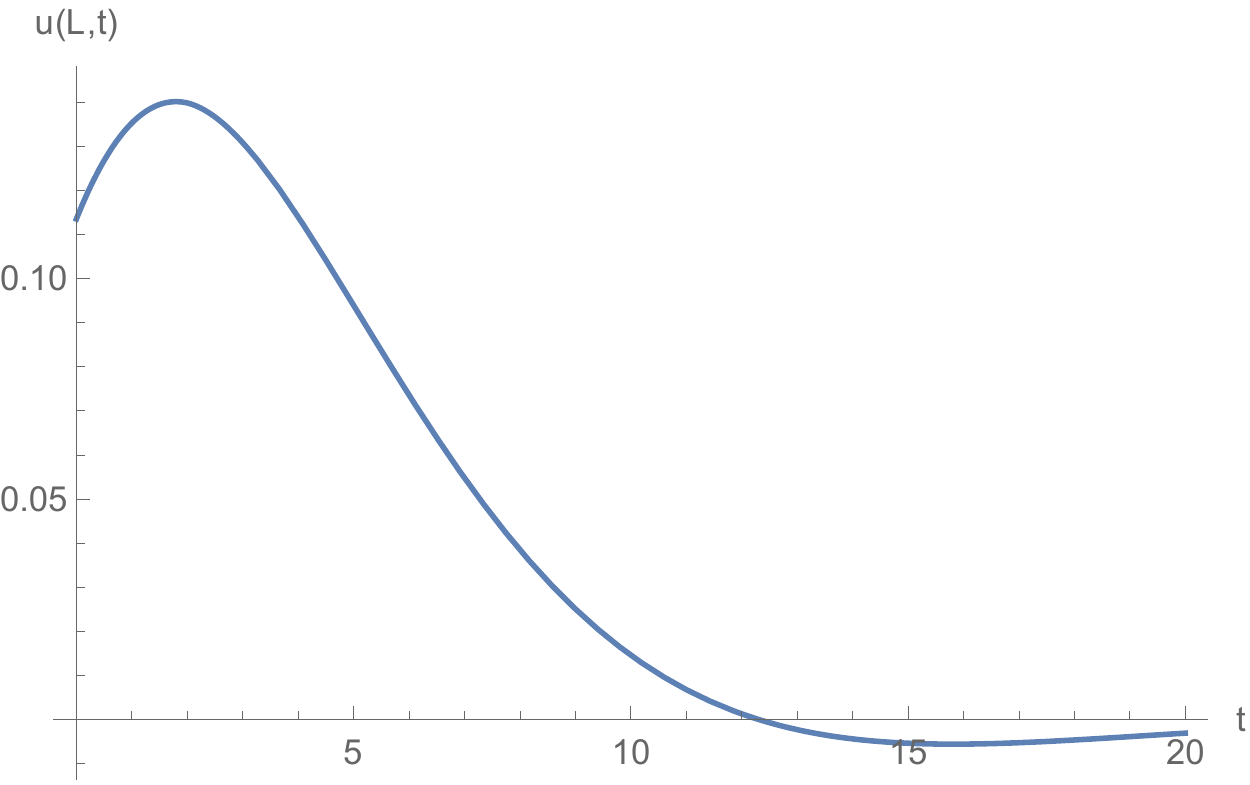}\\
  \caption{Dirichlet controller at the right endpoint ($\lambda=0.03$)}\label{u1t}
\end{figure}

\section*{Acknowledgments} We would like to express our gratitude to the anonymous referee whose valuable insights significantly improved the quality of this article. We would also like to thank Katherine H. Willcox from Izmir Institute of Technology for her English editing of this paper.

\bibliographystyle{siam}
\bibliography{myreferences}

\end{document}